\newtheorem{lemma}{Lemma}
\newtheorem{theorem}{Theorem}
\DeclareMathOperator\mat{M}
\DeclareMathOperator\elin{E}
\DeclareMathOperator\glin{GL}
\DeclareMathOperator\stlin{St}
\DeclareMathOperator\Ker{Ker}
\newcommand{\op}{{\mathrm{op}}}
\DeclareMathOperator{\Aut}{Aut}
\DeclareMathOperator{\End}{End}
\newcommand{\sub}[2]{{_{#1\!}{#2}}}
\newcommand{\up}[2]{{^{#1}\!{#2}}}
\title{Groups with \(\mathsf A_\ell\)-commutator relations}
\author{
  Egor Voronetsky\thanks{The work was supported by the Theoretical Physics and Mathematics Advancement Foundation ``BASIS'' and by ``Native towns'', a social investment program of PJSC ``Gazprom Neft''.} \\
  Chebyshev Laboratory, \\
  St. Petersburg State University, \\
  14th Line V.O., 29B, \\
  Saint Petersburg 199178 Russia \\
}
\begin{document}
\maketitle

\begin{abstract}
If \(A\) is a unital associative ring and \(\ell \geq 2\), then the general linear group \(\glin(\ell, A)\) has root subgroups \(U_\alpha\) and Weyl elements \(n_\alpha\) for \(\alpha\) from the root system of type \(\mathsf A_{\ell - 1}\). Conversely, if an arbitrary group has such root subgroups and Weyl elements for \(\ell \geq 4\) satisfying natural conditions, then there is a way to recover the ring \(A\). We prove a generalization of this result not using the Weyl elements, so instead of the matrix ring \(\mat(\ell, A)\) we construct a non-unital associative ring with a well-behaved Peirce decomposition.
\end{abstract}

\section{Introduction}

General linear groups \(\glin(\ell, A)\), their elementary subgroups \(\elin(\ell, A)\) and the Steinberg groups \(\stlin(\ell, A)\) over a unital associative ring \(A\) are a classical subject of lower unstable algebraic \(\mathrm K\)-theory, see \cite{hahn-omeara} and \cite{milnor}. For example, it is known that \(\elin(\ell, A)\) and \(\stlin(\ell, A)\) are perfect groups for \(\ell \geq 3\), \(\stlin(\ell, A)\) is centrally closed for \(\ell \geq 5\). If \(A\) is a finite \(K\)-algebra for a unital commutative ring \(K\), then \(\elin(\ell, A)\) is normal in \(\glin(\ell, A)\) for \(\ell \geq 3\) and \(\stlin(\ell, A)\) is a central extension of \(\elin(\ell, A)\) for \(\ell \geq 4\).

These groups have naturally defined root subgroups \(U_\alpha\) with explicit isomorphisms \(x_\alpha \colon A \to U_\alpha\), where \(\alpha\) are the roots of the root system \(\Phi\) of type \(\mathsf A_{\ell - 1}\). The maps \(x_\alpha\) satisfy the Steinberg relations
\begin{align*}
x_\alpha(a + b) &= x_\alpha(a)\, x_\alpha(b); \\
[x_\alpha(a), x_\beta(b)] &= 1 \text{ if } \alpha + \beta \notin \Phi \cup \{0\}; \\
[x_\alpha(a), x_\beta(b)] &= x_{\alpha + \beta}(ab) \text{ or } x_{\alpha + \beta}(- ba) \text{ if } \alpha + \beta \in \Phi. 
\end{align*}
Notice that there is no commutator relation for \([x_\alpha(a), x_{-\alpha}(b)]\). Also, the Weyl elements \(n_\alpha = x_\alpha(1)\, x_{-\alpha}(-1)\, x_\alpha(1)\) act on the root subgroups by
\[\up{n_\alpha}{x_\beta(a)} = x_{s_\alpha(\beta)}(\pm a),\]
where \(s_\alpha \colon \Phi \to \Phi,\, \beta \mapsto \beta - 2 \frac{(\alpha, \beta)}{(\alpha, \alpha)} \alpha\) is a reflection from the Weyl group of \(\Phi\).

Conversely, if \(G\) is an arbitrary group with root subgroups \(U_\alpha\) and the distinguished elements \(n_\alpha\) parametrized by the roots of \(\Phi\) and \(\ell \geq 4\), then under suitable assumptions it is possible to recover the ring \(A\) and a homomorphism \(\stlin(\ell, A) \to G\). This is proved in \cite{graded-groups}, as well as a generalization for the Chevalley groups of types \(\mathsf D_\ell\) and \(\mathsf E_\ell\). Much more general results involving arbitrary root systems are known in the case of division rings, see e.g. \cite{root-group-datum} and \cite{abs-root-subgr}.

There is a generalization of the groups \(\glin(\ell, A)\), \(\elin(\ell, A)\), and \(\stlin(\ell, A)\) for the non-matrix case. If \(R\) is a unital associative ring with a complete family of full idempotents \(e_1\), \ldots, \(e_\ell\) (i.e. such that \(R e_i R = R\)), then we have the Peirce decomposition \(R = \bigoplus_{1 \leq i, j \leq \ell} R_{ij}\), where \(R_{ij} = e_i R e_j\). Let \(\glin(R) = R^*\), \(t_{ij}(a) = 1 + a \in \glin(R)\) for \(i \neq j\) and \(a \in R_{ij}\), and the elementary subgroup \(\elin(R) \leq \glin(R)\) be the subgroup generated by \(t_{ij}(a)\). The Steinberg group \(\stlin(R)\) is defined as the abstract group generated by \(x_{ij}(a)\) for \(i \neq j\) and \(a \in R_{ij}\) satisfying the appropriate analogues of the Steinberg relations. If \(R = \mat(\ell, A)\) is the matrix ring and \(e_i = e_{ii}\) are the diagonal idempotents, then this coincides with the classical definitions. Some results of \(\mathrm K\)-theory still hold in this generality, for example, the centrality of the extension \(\stlin(R) \to \elin(R)\) is proved in \cite{central-k2}. But there is no analogue of \(n_\alpha\) in \(\glin(R)\), since it is possible that \(R\) contains no invertible elements in \(e_1 R e_2 + e_2 R e_1\) for \(\ell = 2\) (say, if \(R = \mat(3, \mathbb R)\), \(e_1 = e_{11}\), and \(e_2 = e_{22} + e_{33}\)).

In this generality the collection of the root subgroups \(U_\alpha \leq \glin(R)\) satisfies the axioms of \(\Phi\)-commutator relations from \cite{st-jordan} (if we omit the condition that the root subgroups generate the whole group). In this paper we give the necessary and sufficient conditions for \(\Phi\)-commutator relations to be induced from a Steinberg group over an associative ring with a Peirce decomposition, where \(\ell \geq 4\).

Actually, in our main result we do not require that \(R\) is unital. Since we cannot consider the complete families of idempotents \(e_i \in R\), we have to consider rings with abstract Peirce decompositions \(R = \bigoplus_{1 \leq i, j \leq \ell} R_{ij}\). Our main result has two variants, depending on a generalization of the fullness of the idempotents \(e_i\) to the non-unital case. Namely, we require that \(R_{ij} R_{jk} = R_{ik}\) for all \(i\), \(j\), \(k\), this is the sufficient condition for \(\elin(R)\) and \(\stlin(R)\) to be perfect. Also, we require that either \(R \otimes_R R \to R\) is an isomorphism (i.e. \(R\) is firm) or \(R \to \End(R)^\op \times \End(R)\) is injective. In the first case the main results of \cite{central-k2} hold with the same proofs, i.e. the root elimination and the centrality of \(\mathrm K_2\). In the second case the root subgroups may be defined in \(\Aut(R)\) instead of \(R\) by lemma \ref{center-perf} proved below, i.e. we may consider a generalized projective general linear group with \(\Phi\)-commutator relations.

The firm rings seems to be the most natural generalization of unital rings, see e.g. \cite{tensor-idem} and \cite{quillen}. For example, the ring of finitary matrices \(\mat(\infty, A)\) over a unital associative ring \(A\) is firm, but not unital. There are finite firm non-unital algebras over fields, see \cite[example 5]{locally-unital}.

\section{Peirce decompositions of non-unital rings}

All rings in this paper are associative, but not necessarily unital. For a ring \(R\) and its non-unital modules \(M_R\), \(\sub RN\) we use the notation \(M \otimes_R N = (M \otimes N) / \langle mr \otimes n - m \otimes rn \mid m \in M,\, n \in N,\, r \in R \rangle\), where the unlabelled tensor product is taken over \(\mathbb Z\). A module \(M_R\) is called
\begin{itemize}
\item \textit{unital}, if \(MR = M\);
\item \textit{firm}, if \(M \otimes_R R \to M\) is an isomorphism;
\item \textit{reduced}, if it is unital and there are no non-zero \(m \in M\) such that \(mR = 0\);
\end{itemize}
and similarly for left modules, see \cite{quillen} for details. A ring \(R\) is called
\begin{itemize}
\item \textit{idempotent}, if \(R^2 = R\);
\item \textit{firm}, if \(R \otimes_R R \to R\) is an isomorphism;
\item \textit{reduced}, if it is idempotent and there are no non-zero \(x \in R\) such that \(Rx = xR = 0\).
\end{itemize}
Any unital ring \(R\) is firm and reduced, all unital modules over it are also firm and reduced. Moreover, a right module \(M\) over a unital ring is unital in the above terminology if and only if \(m 1 = m\) for all \(m \in M\).

We say that a ring \(R\) has a \textit{Peirce decomposition} of rank \(\ell \geq 0\) if
\begin{align*}
R &= \bigoplus_{1 \leq i, j \leq \ell} R_{ij}; \\
R_{ij} R_{kl} &= 0 \text{ for } j \neq k; \\
R_{ij} R_{jk} &\leq R_{ik}.
\end{align*}
For example, if \(R\) is unital with a complete family of orthogonal idempotents \(e_1\), \ldots, \(e_\ell\), then \(R_{ij} = e_i R e_j\) is a Peirce decomposition of \(R\). It is easy to see that every Peirce decomposition of a unital ring is of this type. A Peirce decomposition of a \(K\)-algebra \(R\), where \(K\) is a unital commutative ring, is a Peirce decomposition in the above sense such that \(R_{ij}\) are \(K\)-submodules.

Generalizing the properties of rings, we say that a Peirce decomposition of \(R\) is
\begin{itemize}
\item \textit{idempotent}, if \(R_{ij} R_{jk} = R_{ik}\) for all \(i\), \(j\), \(k\);
\item \textit{firm}, if \(R_{ij} \otimes_{R_{jj}} R_{jk} \to R_{ik}\) are isomorphisms for all \(i\), \(j\), \(k\);
\item \textit{reduced}, if it is idempotent and \(R\) is reduced.
\end{itemize}

\begin{lemma} \label{full-idem}
Let \(R\) be a unital ring with a complete family of orthogonal idempotents \(e_1\), \ldots, \(e_\ell\) and \(R_{ij} = e_i R e_j\) be the induced Peirce decomposition. Then the following properties are equivalent:
\begin{itemize}
\item the Peirce decomposition is idempotent;
\item the Peirce decomposition is firm;
\item the Peirce decomposition is reduced;
\item the idempotents \(e_i\) are full, i.e. \(R e_i R = R\).
\end{itemize}
\end{lemma}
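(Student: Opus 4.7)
The plan is to anchor everything on the fullness condition (4) and to use that fullness supplies enough ``dual basis'' data to make the Peirce tensor products reconstruct the original spaces. The only substantive step is (1) \(\Rightarrow\) (2); the remaining equivalences fall out from direct bookkeeping.

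Specifically, I would first dispatch (1) \(\Leftrightarrow\) (4) by observing that the Peirce decomposition of the two-sided ideal \(R e_i R\) reads
\[R e_i R = \bigoplus_{j, k} e_j R e_i R e_k = \bigoplus_{j, k} R_{ji} R_{ik},\]
so \(R e_i R = R\) for every \(i\) is equivalent to \(R_{ji} R_{ik} = R_{jk}\) for all \(i, j, k\), which is idempotency. The equivalence (1) \(\Leftrightarrow\) (3) is essentially vacuous: a unital ring is automatically reduced, since \(x = 1 \cdot x\) forces the implication \(Rx = 0 \Rightarrow x = 0\), so the reducedness clause in (3) is content-free. Finally, (2) \(\Rightarrow\) (1) is immediate because the surjectivity of the multiplication map \(R_{ij} \otimes_{R_{jj}} R_{jk} \to R_{ik}\) is exactly the statement \(R_{ij} R_{jk} = R_{ik}\).

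The hard part, and the main obstacle, is (1) \(\Rightarrow\) (2): injectivity of the multiplication map. Assuming (1), and hence (4), I would write \(1 = \sum_\alpha p_\alpha q_\alpha\) with \(p_\alpha \in R e_j\) and \(q_\alpha \in e_j R\), and for each fixed \(i\) extract
\[e_i = e_i \cdot 1 \cdot e_i = \sum_\alpha (e_i p_\alpha)(q_\alpha e_i) = \sum_\alpha u_\alpha v_\alpha, \quad u_\alpha \in R_{ij},\; v_\alpha \in R_{ji}.\]
I would then define \(\psi \colon R_{ik} \to R_{ij} \otimes_{R_{jj}} R_{jk}\) by \(a \mapsto \sum_\alpha u_\alpha \otimes v_\alpha a\) (noting \(v_\alpha a \in R_{ji} R_{ik} \subseteq R_{jk}\)) and verify that it is two-sided inverse to the multiplication map \(\mu\). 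One direction is immediate from \(\sum_\alpha u_\alpha v_\alpha = e_i\), while for the other, on a pure tensor \(p \otimes q\), one uses that \(v_\alpha p \in R_{ji} R_{ij} \subseteq R_{jj}\) may be slid across the tensor, collapsing \(\psi \mu(p \otimes q) = \sum_\alpha u_\alpha \otimes v_\alpha p q\) to \(\sum_\alpha u_\alpha v_\alpha p \otimes q = e_i p \otimes q = p \otimes q\). The only delicate point is ensuring that the \(R_{jj}\)-balancing of the tensor is invoked only for elements genuinely in \(R_{jj}\); this is exactly what the Peirce conditions \(R_{ji} R_{ij} \subseteq R_{jj}\) guarantee, and once the dual basis is in hand the rest is mechanical.
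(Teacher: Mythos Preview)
Your proof is correct and follows essentially the same route as the paper's: both arguments pivot on writing \(1 = \sum_t a_t e_j b_t\) from fullness and using the resulting ``dual basis'' elements in \(R_{ij}\) and \(R_{ji}\) to control the tensor product \(R_{ij} \otimes_{R_{jj}} R_{jk}\). The only cosmetic difference is that you package the firmness step as an explicit inverse \(\psi\colon a \mapsto \sum_\alpha u_\alpha \otimes v_\alpha a\), whereas the paper directly computes that any element \(\sum_s x_s \otimes y_s\) in the kernel of multiplication vanishes by the same balancing trick; your formulation is arguably cleaner but the content is identical.
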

\begin{proof}
If the Peirce decomposition is idempotent, then \(R_{ik} = R_{ij} R_{jk} \leq R e_j R\) for all \(i\), \(j\), \(k\), hence the idempotents are full. Conversely, suppose that the idempotents are full. Fix indices \(i\), \(j\), \(k\) and let \(1 = \sum_{t \in T} a_t e_j b_t\). If \(x \in R_{ik}\), then
\[x = \sum_{t \in T} (x a_t e_j) (e_j b_t e_k) \in R_{ij} R_{jk}.\]
If \(\sum_{s \in S} x_s y_s = 0\) for \(x_s \in R_{ij}\) and \(y_s \in R_{jk}\), then
\[\sum_{s \in S} x_s \otimes y_s = \sum_{s \in S} \sum_{t \in T} (e_i a_t e_j b_t x_s \otimes y_s - e_i a_t e_j \otimes e_j b_t x_s y_s).\]
In other words, the Peirce decomposition is firm. Clearly, it is also reduced.
\end{proof}

It turns out that the properties of a Peirce decomposition imply the corresponding properties of the ring.
\begin{lemma} \label{root-elim}
Let \(R\) be a ring with a firm Peirce decomposition of rank \(\ell \geq 2\). Let
\begin{align*}
R_{0i} &= R_{\ell - 1, i} \oplus R_{\ell i} \text{ for } 1 \leq i \leq \ell; \\
R_{j0} &= R_{j, \ell - 1} \oplus R_{j \ell} \text{ for } 0 \leq j \leq \ell.
\end{align*}
Then the Peirce decomposition of rank \(\ell - 1\) given by \(R = \bigoplus_{0 \leq i, j \leq \ell - 2} R_{ij}\) is firm. The same claim holds for idempotent Peirce decompositions.
\end{lemma}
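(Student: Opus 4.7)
The plan is to reduce each part to the original hypothesis by a case analysis on which of $i, j, k \in \{0, 1, \ldots, \ell-2\}$ equal the merged index $0$. For the \emph{idempotent} case, each $R'_{\alpha\beta}$ is a direct sum of at most four pieces of the original decomposition, and distributing the product $R'_{ij} R'_{jk}$ the matching-index terms yield $R_{st} R_{tu} = R_{su}$ by original idempotency while the non-matching ones vanish, so everything collects into $R'_{ik}$. For the \emph{firm} case, when $j \neq 0$ the map $R'_{ij} \otimes_{R'_{jj}} R'_{jk} \to R'_{ik}$ splits as a direct sum of original firmness isomorphisms indexed over the pieces of $R'_{i\cdot}$ and $R'_{\cdot k}$. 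So the essential case is $j = 0$; I treat the representative subcase $i, k \in \{1, \ldots, \ell - 2\}$ (the subcases in which $i$ or $k$ are also $0$ work identically with more summands).

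Form the diagram
\[R_{i,\ell-1} \otimes_{R_{\ell-1,\ell-1}} R_{\ell-1,k} \;\xrightarrow{\phi}\; R'_{i0} \otimes_{R'_{00}} R'_{0k} \;\xrightarrow{\mu}\; R_{ik},\]
where $\phi$ is induced by the inclusions and $\mu$ is multiplication. The composite $\mu\phi$ is the original firmness isomorphism, so it suffices to show $\phi$ is surjective; then $\phi$ is an iso and so is $\mu$. A generator $a \otimes b$ of $R'_{i0} \otimes_{R'_{00}} R'_{0k}$ has four possible types according to the components of $a$ and $b$. The mixed types vanish: if $a \in R_{i,\ell-1}$ and $b \in R_{\ell,k}$, then using firmness $\Rightarrow$ idempotency for the original decomposition, factor $a = \sum_t x_t y_t$ with $x_t \in R_{i,\ell-1}$, $y_t \in R_{\ell-1,\ell-1} \subseteq R'_{00}$; then $a \otimes b = \sum_t x_t \otimes y_t b = 0$ because $y_t b \in R_{\ell-1,\ell-1} R_{\ell,k} = 0$, and the type $a \in R_{i,\ell}, b \in R_{\ell-1,k}$ is symmetric. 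The type $a \in R_{i,\ell}, b \in R_{\ell,k}$ collapses into $\phi$'s image by factoring $a = \sum_t x_t y_t$ with $x_t \in R_{i,\ell-1}$, $y_t \in R_{\ell-1,\ell} \subseteq R'_{00}$, yielding $a \otimes b = \sum_t x_t \otimes y_t b$ with $y_t b \in R_{\ell-1,k}$. The remaining type $a \in R_{i,\ell-1}, b \in R_{\ell-1,k}$ is tautologically in the image.

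The main obstacle is this diagonal-collapse step, which channels all four types through the single corner $R_{i,\ell-1} \otimes_{R_{\ell-1,\ell-1}} R_{\ell-1,k}$; it works precisely because firmness supplies enough idempotency to split every element into factors compatible with the $R'_{00}$-action, after which the triangular diagram delivers surjectivity and injectivity of $\mu$ simultaneously, with no further bookkeeping.
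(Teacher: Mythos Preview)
Your proof is correct and follows essentially the same route as the paper. Both arguments dispose of the case $j \neq 0$ by splitting into a direct sum of original firmness isomorphisms, and for $j = 0$ both set up the triangle
\[
R_{ip} \otimes_{R_{pp}} R_{pk} \xrightarrow{\phi} R'_{i0} \otimes_{R'_{00}} R'_{0k} \xrightarrow{\mu} R_{ik}
\]
with $\mu\phi$ an original firmness isomorphism, then show $\phi$ is surjective by collapsing the four tensor types onto the $(p,p)$-corner via idempotency-driven factorisations and the $R'_{00}$-balancing relation; the only difference is that you take $p = \ell - 1$ while the paper takes $p = \ell$, a purely cosmetic choice. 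The paper also makes explicit the reduction of the subcases $i = 0$ or $k = 0$ to the basic one by distributing over the extra direct summands, which is exactly what your parenthetical remark claims.
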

\begin{proof}
Clearly, \(R_{ij} \otimes_{R_{jj}} R_{jk} \to R_{ik}\) are isomorphisms for \(1 \leq j \leq \ell\) and all \(i\), \(k\). In order to prove that \(R_{i0} \otimes_{R_{00}} R_{0j} \to R_{ij}\) are isomorphisms for all \(i\) and \(j\), it suffices to consider the case \(1 \leq i, j \leq \ell\) and to prove that
\begin{align*}
R_{i, \ell - 1} \otimes R_{\ell - 1, j} &\leq R_{i \ell} \otimes R_{\ell j} + \langle xy \otimes z - x \otimes yz \mid x \in R_{i, \ell - 1},\, y \in R_{\ell - 1, \ell},\, z \in R_{\ell j} \rangle; \\
R_{i, \ell - 1} \otimes R_{\ell j} &\leq \langle xy \otimes z - x \otimes yz \mid x \in R_{i \ell - 1},\, y \in R_{\ell \ell},\, z \in R_{\ell j} \rangle; \\
R_{i \ell} \otimes R_{\ell - 1, j} &\leq \langle xy \otimes z - x \otimes yz \mid x \in R_{i \ell},\, y \in R_{\ell \ell},\, z \in R_{\ell - 1, j} \rangle.
\end{align*}
But these relations follow from \(R_{i k} = R_{ij} R_{jk}\) for \(1 \leq i, j, k \leq \ell\). The idempotent case is clear.
\end{proof}

We also need a result from non-unital Morita theory.

\begin{lemma} \label{morita}
Let \(R\) be a firm ring, \(P_R\) and \(\sub RQ\) be firm modules, \(\langle -, = \rangle \colon Q \times P \to R\) be an \(R\)-bilinear map such that \(R = \langle Q, P \rangle\). Let also \(S = P \otimes_R Q\). Then \(\bigl(\begin{smallmatrix} S & P \\ Q & R \end{smallmatrix}\bigr)\) is an associative ring with a firm Peirce decomposition.
\end{lemma}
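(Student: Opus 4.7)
The plan is to verify in turn that $M = \bigl(\begin{smallmatrix} S & P \\ Q & R \end{smallmatrix}\bigr)$ carries an associative ring structure with the obvious Peirce decomposition $M_{11} = S$, $M_{12} = P$, $M_{21} = Q$, $M_{22} = R$, and then that this decomposition is firm. For the ring structure I would define the block multiplications $M_{ij} \cdot M_{jk} \to M_{ik}$ by the natural maps dictated by the data: the two ring products on $R$ and on $S$, the $R$-actions on $P$ and $Q$, the pairing $Q \times P \to R$, the canonical map $P \times Q \to P \otimes_R Q = S$, and the $S$-actions $(p \otimes q) \cdot p' = p \langle q, p' \rangle$ and $q \cdot (p \otimes q') = \langle q, p \rangle q'$, together with the multiplication $(p \otimes q)(p' \otimes q') = p \langle q, p' \rangle \otimes q'$ on $S$ itself. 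Well-definedness and associativity in each of the finitely many triples $M_{ij} \cdot M_{jk} \cdot M_{kl}$ reduce to $R$-bilinearity of $\langle \cdot, \cdot \rangle$ and associativity of $R$, so this half of the proof is a routine case check.

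For firmness one must verify that $M_{ij} \otimes_{M_{jj}} M_{jk} \to M_{ik}$ is an isomorphism in each of the eight cases. Four are immediate: $P \otimes_R Q \to S$ is the definition of $S$, while $R \otimes_R R \to R$, $R \otimes_R Q \to Q$, and $P \otimes_R R \to P$ hold by the given firmness hypotheses. The remaining four all involve $\otimes_S$, and my plan is to reduce them to the single claim that $\phi \colon Q \otimes_S P \to R$, $q \otimes p \mapsto \langle q, p \rangle$, is an isomorphism. Granted this, the canonical associativity of tensor products of firm bimodules yields
\[
S \otimes_S S = (P \otimes_R Q) \otimes_S (P \otimes_R Q) \cong P \otimes_R (Q \otimes_S P) \otimes_R Q \cong P \otimes_R R \otimes_R Q \cong S,
\]
and likewise $S \otimes_S P \cong P$ and $Q \otimes_S S \cong Q$, completing firmness.

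The hard part will therefore be injectivity of $\phi$, since surjectivity is just the hypothesis $R = \langle Q, P \rangle$. Given $\sum_i q_i \otimes_S p_i$ with $\sum_i \langle q_i, p_i \rangle = 0$, I would use firmness of $P$ as a right $R$-module to write each $p_i$ as $\sum_j p_{ij} r_{ij}$, and then apply $R = \langle Q, P \rangle$ to decompose each $r_{ij} = \sum_k \langle q_{ijk}, p_{ijk} \rangle$. The resulting identity $p_{ij} \langle q_{ijk}, p_{ijk} \rangle = (p_{ij} \otimes q_{ijk}) \cdot p_{ijk}$ lets one slide an element of $S$ across the tensor sign via the defining relation of $\otimes_S$; combined with the vanishing $\sum_i \langle q_i, p_i \rangle = 0$ in the firm ring $R$, this should telescope the sum to zero in $Q \otimes_S P$. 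The delicate point is arranging the calculation so that the final answer does not depend on the chosen decompositions; one clean way to organize it is to build an inverse section $R \to Q \otimes_S P$ using the canonical isomorphisms $R \cong R \otimes_R R$ and $P \cong P \otimes_R R$ and then verify that $\phi$ composed with this section is the identity on $R$.
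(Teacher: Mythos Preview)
Your overall plan matches the paper's: both reduce everything to showing that \(\phi \colon Q \otimes_S P \to R\) is an isomorphism, and then derive the remaining firmness isomorphisms \(S \otimes_S P \cong P\), \(Q \otimes_S S \cong Q\), \(S \otimes_S S \cong S\) from it exactly as you wrote. The associativity check and the four ``easy'' firmness cases are fine.

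The gap is in your argument for injectivity of \(\phi\). After sliding the \(S\)-factor across you arrive at
\[
\sum_i q_i \otimes_S p_i \;=\; \sum_{i,j,k} \langle q_i, p_{ij}\rangle\, q_{ijk} \otimes_S p_{ijk},
\]
and what you know is only \(\sum_{i,j,k} \langle q_i, p_{ij}\rangle\, \langle q_{ijk}, p_{ijk}\rangle = 0\) in \(R\). Since the \(q_{ijk}, p_{ijk}\) depend on all three indices, there is nothing to telescope; the relation in \(R\) does not by itself force the displayed element of \(Q \otimes_S P\) to vanish. Your fallback of ``building an inverse section \(R \to Q \otimes_S P\) via \(R \cong R \otimes_R R\)'' is the right instinct, but as stated it is circular: to know the section is well defined you need precisely that the multiplication \(\widetilde R \otimes_R \widetilde R \to \widetilde R\) (with \(\widetilde R = Q \otimes_S P\)) factors \emph{bijectively} through \(R \otimes_R R\).

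The paper supplies the missing idea. Writing \(I = \Ker\phi\), one checks from the product formula \((q\otimes p)(q'\otimes p') = \langle q,p\rangle q' \otimes p' = q \otimes p\langle q',p'\rangle\) that \(I\widetilde R = \widetilde R I = 0\); since the ring product on \(\widetilde R\) is exactly the \(R\)-action composed with \(\phi\), this gives \(IR = RI = 0\). Combined with \(\widetilde R = R\widetilde R = \widetilde R R\) (from \(Q = RQ\), \(P = PR\)) one gets \(I \otimes_R \widetilde R = \widetilde R \otimes_R I = 0\), so \(\widetilde R \otimes_R \widetilde R \to R \otimes_R R \cong R\) is bijective. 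As this composite factors through the surjection \(\widetilde R \otimes_R \widetilde R \to \widetilde R\) (idempotency of \(\widetilde R\)), both \(\widetilde R \otimes_R \widetilde R \to \widetilde R\) and \(\phi\) are bijective. This is exactly the ``section'' you were reaching for, but organised so that well-definedness is automatic.
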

\begin{proof}
It may be directly checked that the canonical multiplication on the matrix ring is associative. Let \(\widetilde R = Q \otimes_S P\), then \(\widetilde R\) has an associative multiplication
\[(q \otimes p) (q' \otimes p') = \langle q, p \rangle q' \otimes p' = q \otimes p \langle q', p' \rangle.\]
Let \(\pi \colon \widetilde R \to R,\, q \otimes p \mapsto \langle q, p \rangle\) be the canonical homomorphism and \(I = \Ker(\pi)\). It is easy to see that \(I \widetilde R = \widetilde RI = 0\) and the ring \(\widetilde R\) is idempotent. We may consider \(\widetilde R\) and \(I\) as non-unital bimodules over \(R\).

The kernel of \(\widetilde R \otimes_R \widetilde R \to R \otimes_R R \cong R\) is the image of \((\widetilde R \otimes_R I) \oplus (I \otimes_R \widetilde R)\). But the latter group is zero since \(IR = RI = 0\) and \(\widetilde R = \widetilde R R = R \widetilde R\). It follows that the composition \(\widetilde R \otimes_R \widetilde R \to \widetilde R \to R\) is bijective, i.e. \(I = 0\).

Now it is easy to see that
\begin{align*}
S \otimes_S P &\cong P \otimes_R Q \otimes_S P \cong P \otimes_R R \cong P; \\
Q \otimes_S S &\cong Q \otimes_S P \otimes_R Q \cong R \otimes_R Q \cong Q; \\
S \otimes_S S &\cong P \otimes_R Q \otimes_S S \cong P \otimes_R Q \cong S.
\end{align*}
In other words, the Peirce decomposition is firm.
\end{proof}

In the next lemma we use the notation \(R_{i*} = \sum_{j = 1}^\ell R_{ij}\), \(R_{*j} = \sum_{i = 1}^\ell R_{ij}\).

\begin{lemma} \label{univ-ring}
Let \(R\) be a \(K\)-algebra with an idempotent Peirce decomposition. Then the ring \(\widetilde R = R \otimes_R R\) is a \(K\)-algebra with a firm Peirce decomposition \(\widetilde R_{ij} = R_{i*} \otimes_R R_{*j} \cong R_{ik} \otimes_{R_{kk}} R_{kj}\). The Peirce decomposition of \(R\) is firm if and only if \(R\) is a firm ring. The ideal \(I = \{x \in R \mid xR = Rx = 0\}\) is a \(K\)-submodule with a decomposition \(I = \bigoplus_{ij} I_{ij}\), where \(I_{ij} = I \cap R_{ij}\), and the induced Peirce decomposition on the factor-ring \(R / I\) is reduced.
\end{lemma}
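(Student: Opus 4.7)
My plan has three stages: (i) construct $\widetilde R$ as a $K$-algebra with its decomposition; (ii) identify each $\widetilde R_{ij}$ with $R_{ik} \otimes_{R_{kk}} R_{kj}$; (iii) deduce the Peirce-firmness of $\widetilde R$, the firm-ring criterion, and the properties of $I$ and $R/I$. For stage (i), I would equip $\widetilde R = R \otimes_R R$ with the multiplication $(a \otimes b)(c \otimes d) = abc \otimes d = a \otimes bcd$; the two expressions coincide in $R \otimes_R R$ by the $R$-balance relation, and associativity is a routine check. The multiplication map $\mu \colon \widetilde R \to R$, $a \otimes b \mapsto ab$, is then a surjective $K$-algebra homomorphism by idempotency. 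Since $R = \bigoplus_i R_{i*}$ is a direct sum of right $R$-submodules (each $R_{i*} R \subseteq R_{i*}$ by $R_{ij} R_{jk} \subseteq R_{ik}$) and analogously $R = \bigoplus_j R_{*j}$ on the left, we get $\widetilde R = \bigoplus_{i,j} \widetilde R_{ij}$ with $\widetilde R_{ij} = R_{i*} \otimes_R R_{*j}$, and the Peirce multiplication conditions hold because $(x \otimes y)(u \otimes v) = x \otimes yuv$, with $yu = 0$ whenever the indices mismatch.

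Stage (ii) is the crux. The natural map $\phi_k \colon R_{ik} \otimes_{R_{kk}} R_{kj} \to \widetilde R_{ij}$ is well-defined since $R_{kk}$-balance is a special case of $R$-balance. Surjectivity uses idempotency twice: on a generator $x \otimes y$ with $x \in R_{il}, y \in R_{mj}$, write $x = \sum u_s v_s$ with $v_s \in R_{ll}$; if $l \ne m$ this gives $x \otimes y = \sum u_s \otimes v_s y = 0$ since $v_s y = 0$, while for $l = m$ the factorization $R_{il} = R_{ik} R_{kl}$ lets us choose $u_s \in R_{ik}, v_s \in R_{kl}$, so $x \otimes y = \sum u_s \otimes v_s y$ with $u_s \in R_{ik}, v_s y \in R_{kj}$ lies in the image of $\phi_k$. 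Injectivity is the main obstacle: given a witness $\sum_s a_s \otimes b_s = \sum_t (u_t r_t \otimes v_t - u_t \otimes r_t v_t)$ of a vanishing element, with $a_s \in R_{ik}, b_s \in R_{kj}$, I plan to reduce to Peirce-homogeneous witnesses (each $r_t \in R_{p_t q_t}$ with $u_t \in R_{ip_t}, v_t \in R_{q_t j}$, so the basic term lives in $R_{iq_t} \otimes R_{q_t j} \oplus R_{ip_t} \otimes R_{p_t j}$), then apply the idempotent factorization to each term to bring all contributions into $R_{ik} \otimes R_{kj}$, checking that the net effect is expressible using only $R_{kk}$-balance relations.

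For stage (iii), firmness of the Peirce decomposition of $\widetilde R$ follows from the identification by taking $k = j$: $\widetilde R_{ij} \cong R_{ij} \otimes_{R_{jj}} R_{jj}$, $\widetilde R_{jj} \cong R_{jj} \otimes_{R_{jj}} R_{jj}$, and $\widetilde R_{il} \cong R_{ij} \otimes_{R_{jj}} R_{jl}$, so the firmness map $\widetilde R_{ij} \otimes_{\widetilde R_{jj}} \widetilde R_{jl} \to \widetilde R_{il}$ reduces via associativity of the tensor product and idempotency of $R_{jj}$ to a tautological isomorphism. The equivalence ``$R$ firm iff the Peirce decomposition is firm'' is then immediate by restricting $\mu$ to Peirce summands. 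For the ideal $I$: if $x = \sum x_{ij} \in I$, projecting $xR = 0$ onto the $(i, n)$-Peirce component gives $\sum_j x_{ij} R_{jn} = 0$, and since each $x_{ij} R_{jn} \subseteq R_{in}$ is contained in this zero sum (take all other $r_{j'n} = 0$), each $x_{ij} R_{jn} = 0$; hence $x_{ij} R = 0$, and symmetrically $R x_{ij} = 0$, so $x_{ij} \in I$ and $I = \bigoplus_{ij} I_{ij}$. Finally, $R/I$ inherits the idempotent Peirce decomposition, and if $\overline x \in R/I$ is two-sidedly killed by $R/I$, then $xR \subseteq I$, so $(xR) R \subseteq I R = 0$; using $R^2 = R$ gives $xR = (xR) R = 0$, similarly $Rx = 0$, so $x \in I$ and $\overline x = 0$.
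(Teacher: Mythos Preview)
Your outline is largely sound, and stages (i) and the $I$, $R/I$ parts of (iii) match the paper's treatment. The genuine gap is the injectivity half of stage (ii), which you correctly flag as ``the main obstacle'' but do not actually resolve.

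Your plan is to take a witnessing relation $\sum_s a_s \otimes b_s = \sum_t (u_t r_t \otimes v_t - u_t \otimes r_t v_t)$, pass to Peirce-homogeneous terms, and then use idempotent factorizations to push everything into $R_{ik} \otimes R_{kj}$. The Peirce-homogenization does work, and one can show (as you implicitly use for surjectivity) that the relation subgroup is generated by $\theta_{kb}$ and $\theta_{bk}$ alone, where $\theta_{bc}(u,r,v) = ur \otimes v - u \otimes rv$ for $u \in R_{ib},\, r \in R_{bc},\, v \in R_{cj}$. The remaining task is then: if a combination of such $\theta_{kb},\, \theta_{bk}$ relations has vanishing component in $R_{ib} \otimes R_{bj}$, show its $R_{ik} \otimes R_{kj}$-component lies in the span of $\theta_{kk}$. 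Concretely, from $\sum_s p_s q_s \otimes v_s = \sum_t u_t \otimes r_t w_t$ in $R_{ib} \otimes_{\mathbb Z} R_{bj}$ you must deduce $\sum_t u_t r_t \otimes w_t - \sum_s p_s \otimes q_s v_s \in \langle xr \otimes y - x \otimes ry : r \in R_{kk}\rangle$. Applying idempotent factorization to $u_t$ or $v_s$ here runs into a well-definedness problem: the resulting element of $R_{ik} \otimes_{R_{kk}} R_{kj}$ depends on the chosen factorization unless you already know that $R_{ik} \otimes_{R_{kk}} R_{kb} \to R_{ib}$ is injective, which is essentially what you are proving. I could not see how to close this loop by elementary manipulations.

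The paper sidesteps this by invoking its Lemma~\ref{morita} (a non-unital Morita result): applied with the firm ring $\widetilde R$ and modules $P = R_{k*} \otimes_R R$, $Q = R \otimes_R R_{*k}$, it yields that the composite
\[
R_{i*} \otimes_R R_{*k} \otimes_{R_{kk}} R_{k*} \otimes_R R_{*j} \to R_{ik} \otimes_{R_{kk}} R_{kj} \to \widetilde R_{ij}
\]
is bijective; since the first arrow is surjective, both are bijections. The substance of Lemma~\ref{morita} is exactly that the kernel $I$ of the pairing map $Q \otimes_S P \to R$ vanishes, proved via $I\widetilde R = \widetilde R I = 0$ together with firmness. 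So the injectivity you need is not a bookkeeping matter but a real lemma, and your proposal should either import that argument or supply a comparable one.
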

\begin{proof}
Let us show that \(R_{ik} \otimes_{R_{kk}} R_{kj} \to \widetilde R_{ij}\) are isomorphisms for all \(i\), \(j\), \(k\). By lemma \ref{morita} applied to the ring \(\widetilde R\) and the modules \(P = R_{k*} \otimes_R R\), \(Q = R \otimes_R R_{*k}\) we get that the composition
\[R_{i*} \otimes_R R_{*k} \otimes_{R_{kk}} R_{k*} \otimes_R R_{*j}
\to R_{ik} \otimes_{R_{kk}} R_{kj}
\to \widetilde R_{ij}\]
is a bijection. The left map in this composition is surjective, so it is a bijection and the required map is also a bijection.

The ring \(\widetilde R\) is a \(K\)-algebra with \(k (x \otimes y) = kx \otimes y = x \otimes ky\), where the right equality follows from the idempotency of \(R\). The Peirce decomposition of \(\widetilde R\) is clearly firm since
\[(R_{ik} \otimes_{R_{kk}} R_{kj}) \otimes_{R_{jj} \otimes_{R_{jj}} R_{jj}} (R_{jl} \otimes R_{ll} R_{ls}) = R_{ik} \otimes R_{kk} R_{kj} \otimes_{R_{jj}} R_{jl} \otimes R_{ll} R_{ls} \cong R_{ij} \otimes R_{jj} R_{js},\]
so the first claim is proved. It follows that if \(R\) is firm, then \(\widetilde R \cong R\) and the Peirce decomposition of \(R\) is already firm. The converse follows from lemma \ref{root-elim}.

The last claim easily follows since if \(xR + Rx \leq I\), then \(xR^2 = R^2 x = 0\), i.e. \(Rx = xR = 0\) and \(x \in I\).
\end{proof}

Not all firm rings are reduced and not all reduced rings are firm. For example, let
\[A = \mathbb Z[x_0, x_1, x_2, \ldots] / \langle x_0 x_i,\, x_i x_0,\, x_i - x_{i + 1}^2 \rangle,\]
it is an idempotent commutative ring. It is easy to check that \(A\) is firm but not reduced since \(x_0 A = A x_0 = 0\) and \(x_0 \neq 0\). On the other hand, \(A / \mathbb Z x_0\) is a reduced commutative ring, but the canonical homomorphism from \((A / \mathbb Z x_0) \otimes_{A / \mathbb Z x_0} (A / \mathbb Z x_0) \cong A\) to \(A / \mathbb Z x_0\) is not an isomorphism. Similar examples may be constructed for Peirce decompositions of any rank \(\ell \geq 1\) using matrix algebras over \(A\) and \(A / \mathbb Z x_0\).

\section{Groups with commutator relations}

An element \(x \in R\) of a ring is called \textit{quasi-invertible} if there is \(y \in R\) such that \(xy + x + y = yx + x + y = 0\). In other words, \(x\) is quasi-invertible if \(x + 1\) is invertible in the ``unitalization'' \(R \rtimes \mathbb Z\) of \(R\). The set of quasi-invertible elements \(R^\circ\) of \(R\) is a group with respect to \(x \circ y = xy + x + y\). If \(R\) is unital, then there is a canonical isomorphism \(R^\circ \to R^*,\, x \mapsto x + 1\). The group \(R^\circ\) acts on \(R\) by automorphisms via \(\up x y = (xy + y) x^{\circ (-1)} + xy + y\), where \(x^{\circ (-1)}\) is the quasi-inverse of \(x \in R^\circ\).

Let \(R\) be a ring with a Peirce decomposition. We denote the group \(R^\circ\) by \(\glin(R)\) and call it the \textit{general linear group} of \(R\). For any \(i \neq j\) and \(a \in R_{ij}\) the \textit{elementary transvection} \(t_{ij}(a) = a\) lie in \(R^\circ\). They satisfy the \textit{Steinberg relations}
\begin{itemize}
\item \(t_{ij}(a) \circ t_{ij}(b) = t_{ij}(a + b)\);
\item \([t_{ij}(a), t_{kl}(b)]_\circ = 0\) for \(j \neq k
\) and \(i \neq l\);
\item \([t_{ij}(a), t_{jk}(b)]_\circ = t_{ik}(ab)\) for \(i \neq k\).
\end{itemize}
The \textit{elementary subgroup} \(\elin(R)\) is the subgroup of \(\glin(R)\) generated by the elementary transvections. The \textit{Steinberg group} \(\stlin(R)\) is the abstract group with the generators \(x_{ij}(a)\) for \(i \neq j\) and \(a \in R_{ij}\) and the Steinberg relations.

\begin{lemma} \label{center-perf}
If the Peirce decomposition of \(R\) is idempotent and of rank \(\ell \geq 3\), then \(\stlin(R)\) and \(\elin(R)\) are perfect groups. If the Peirce decomposition is reduced, then the group of upper triangular elements \(\prod_{i < j}^\circ t_{ij}(R_{ij})\) has trivial intersection with the center of \(\elin(R)\) and injectively maps to \(\Aut(R)\).
\end{lemma}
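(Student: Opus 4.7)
The argument splits into the two halves of the statement.

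For the perfectness claim, the strategy is to express every generator of $\stlin(R)$ as a product of commutators. Since $\ell \geq 3$, for each pair $i \neq j$ one can pick a third index $k$, and idempotency gives $R_{ij} = R_{ik} R_{kj}$; hence any $a \in R_{ij}$ decomposes as a finite sum $a = \sum_s b_s c_s$ with $b_s \in R_{ik}$ and $c_s \in R_{kj}$. The Steinberg relations then produce
\[ x_{ij}(a) = \prod_s x_{ij}(b_s c_s) = \prod_s [x_{ik}(b_s), x_{kj}(c_s)], \]
which lies in $[\stlin(R), \stlin(R)]$. The same calculation performed inside $\elin(R) \leq R^\circ$ establishes perfectness of $\elin(R)$.

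Now assume the Peirce decomposition is reduced. The plan is to first prove that the map $U^+ := \prod_{i<j}^\circ t_{ij}(R_{ij}) \to \Aut(R)$ coming from the action of $R^\circ$ on $R$ is injective, and then to deduce the center statement from it. Write $u = \sum_{i<j} a_{ij}$; since $\circ$-commutation unfolds to ring commutation, the action of $u$ is trivial precisely when $uy = yu$ for every $y \in R$. The key computation, carried out for a Peirce-homogeneous $y \in R_{kl}$, is
\[ uy = \sum_{i<k} a_{ik} y \in \bigoplus_{i<k} R_{il}, \qquad yu = \sum_{j>l} y a_{lj} \in \bigoplus_{j>l} R_{kj}. \]
Strict upper-triangularity of $u$ forces the two Peirce supports to be disjoint (a common component $R_{pq}$ would need $p<k$ and $p=k$ simultaneously), so projecting onto each $R_{pq}$ makes both sums vanish. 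Running $l$ through all indices yields $a_{ik} R_{k\ast} = 0$, and hence $a_{ik} R = 0$ for every $i<k$; the symmetric argument gives $R a_{lj} = 0$ for $l<j$. Reducedness then forces every $a_{ij} = 0$, so $u = 0$.

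For the intersection with $Z(\elin(R))$, I would take $u \in U^+ \cap Z(\elin(R))$ and observe that commutation with each $t_{pq}(c)$ in $R^\circ$ says exactly $uc = cu$ for every off-diagonal $c$. To upgrade this to $uc = cu$ for diagonal $c \in R_{ii}$ I would invoke idempotency to write $R_{ii} = R_{i j_0} R_{j_0 i}$ for some $j_0 \neq i$ and compute $u(c_s d_s) = c_s (u d_s) = (c_s d_s) u$ for $c_s \in R_{ij_0}$ and $d_s \in R_{j_0 i}$. Once $u$ commutes with all of $R$, the injectivity already established forces $u = 0$. The main technical obstacle is the Peirce index bookkeeping in the injectivity step, but once one notices the forced disjointness of the Peirce supports of $uy$ and $yu$, the rest of the argument is routine.
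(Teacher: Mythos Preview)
Your argument is correct and follows essentially the same route as the paper. Both proofs reduce everything to the observation that a strictly upper-triangular element $u$ lying in the centre of the ring $R$ must vanish: you obtain $a_{ik}R = 0$ and $Ra_{lj} = 0$ directly by testing against each Peirce block $R_{kl}$ and noting the disjointness of the supports of $uy$ and $yu$, while the paper first extracts $R_{ii}g_{ij} = g_{ij}R_{jj} = 0$ and then uses idempotency $R_{ki} = R_{ki}R_{ii}$ to amplify this to $Rg_{ij} = g_{ij}R = 0$; the only other difference is that you prove the injectivity into $\Aut(R)$ first and derive the centre statement from it, whereas the paper proceeds in the opposite order.
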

\begin{proof}
The first claim easily follows from the Steinberg relations. To prove the second claim, let \(g \in \prod_{i < j}^\circ t_{ij}(R_{ij})\) be in the center of \(\elin(R)\). In other words, \(g \in R\) has zero components in \(R_{ij}\) for \(i \geq j\) and it lies in the center of \(R\), since \(\bigcup_{i \neq j} R_{ij}\) generates the ring \(R\) for \(\ell \geq 2\) (in the case \(\ell \leq 1\) we already have \(g = 0\)). It follows that \(R_{ii} g_{ij} = g_{ij} R_{jj} = 0\) for \(i < j\), where \(g_{ij}\) is the component of \(g\) in \(R_{ij}\). Since the Peirce decomposition is reduced, we have \(R g_{ij} = g_{ij} R = 0\) for \(i < j\), that is \(g = 0\) as claimed. If an upper triangular \(g\) trivially acts on \(R\), then it also lies in the center of \(R\), so \(g = 0\) by the above argument.
\end{proof}

Let
\[\Phi = \{\mathrm e_i - \mathrm e_j \in \mathbb R^\ell \mid i \neq j\}\]
be the root system of type \(\mathsf A_{\ell - 1}\) for \(\ell \geq 1\). We say that a group \(G\) has \(\Phi\)-\textit{commutator relations} if there are \textit{root subgroups} \(U_\alpha \leq G\) for \(\alpha \in \Phi\) such that
\begin{itemize}
\item \([U_\alpha, U_\beta] = 1\) for \(\alpha + \beta \notin \Phi \cup \{0\}\);
\item \([U_\alpha, U_\beta] \leq U_{\alpha + \beta}\) for \(\alpha + \beta \in \Phi\).
\end{itemize}
See \cite[definition 3.2]{st-jordan} for a generalization to arbitrary sets of roots.

Recall the group-theoretic identities
\begin{align*}
[xy, z] &= \up x{[y, z]}\, [x, z]; \label{l} \tag{L} \\
[x, yz] &= [x, y]\, \up y{[x, z]}; \label{r} \tag{R} \\
\up y{[x, [y^{-1}, z]]}\, \up z{[y, [z^{-1}, x]]}\, \up x{[z, [x^{-1}, y]]} &= 1. \label{hw} \tag{HW}
\end{align*}

Let \(G\) be a group with \(\Phi\)-commutator relations. By (\ref{l}) and (\ref{r}) the maps
\[c_{\alpha \beta} \colon U_\alpha \times U_\beta \to U_{\alpha + \beta}, (x, y) \mapsto [x, y]\]
are biadditive for all bases \((\alpha, \beta)\) of root subsystems of type \(\mathsf A_2\) (i.e. the pairs of roots with the angle \(\frac{2 \pi}3\) between them), so we may consider them as homomorphisms \(U_\alpha \otimes U_\beta \to U_{\alpha + \beta}\). If \((\alpha, \beta, \gamma)\) is a basis of a root subsystem of type \(\mathsf A_3\) (such that \(\alpha \perp \gamma\)), then
\[[x, [y, z]] = [[x, y], z]\]
for all \(x \in U_\alpha\), \(y \in U_\beta\), \(z \in U_\gamma\) as a corollary from (\ref{hw}). Actually, these are the only relations for
\[U_{\alpha + \beta + \gamma} \rtimes U_{\beta + \gamma} \rtimes U_{\alpha + \beta} \rtimes U_\gamma \rtimes U_\beta \rtimes U_\alpha\]
to be a group.

Recall that up to the order the only bases of root subsystems of \(\Phi\) of type \(\mathsf A_2\) are the pairs \((\mathrm e_i - \mathrm e_j, \mathrm e_j - \mathrm e_k)\) for distinct \(i\), \(j\), \(k\), and the only bases of root subsystems of type \(\mathsf A_3\) are the triples \((\mathrm e_i - \mathrm e_j, \mathrm e_j - \mathrm e_k, \mathrm e_k - \mathrm e_l)\) for distinct \(i\), \(j\), \(k\), \(l\). We say that \(\Phi\)-commutator relations are
\begin{itemize}
\item \textit{idempotent}, if \([U_\alpha, U_\beta] = U_{\alpha + \beta}\) for any base \((\alpha, \beta)\) of a root subsystem of type \(\mathsf A_2\);
\item \textit{firm}, if they are idempotent and for every basis \((\alpha, \beta, \gamma)\) a root subsystem of type \(\mathsf A_3\) (such that \(\alpha \perp \gamma\)) the kernel of
\[(c_{\alpha, \beta + \gamma} \enskip c_{\alpha + \beta, \gamma}) \colon (U_\alpha \otimes U_{\beta + \gamma}) \oplus (U_{\alpha + \beta} \otimes U_\gamma) \to U_{\alpha + \beta + \gamma}\]
coincides with the image of
\[\bigl(\begin{smallmatrix} 1 \otimes c_{\beta \gamma} & c_{\alpha + \beta, -\beta} \otimes 1 \\ -c_{\alpha \beta} \otimes 1 & -1 \otimes c_{-\beta, \beta + \gamma} \end{smallmatrix}\bigr) \colon (U_\alpha \otimes U_\beta \otimes U_\gamma) \oplus (U_{\alpha + \beta} \otimes U_{-\beta} \otimes U_{\beta + \gamma}) \to (U_\alpha \otimes U_{\beta + \gamma}) \oplus (U_{\alpha + \beta} \otimes U_\gamma).\]
\item \textit{reduced}, if they are idempotent and for any root subsystem of type \(\mathsf A_2\) and any root \(\alpha\) from this subsystem there are no non-trivial \(g \in U_\alpha\) such that \([g, U_\beta] = 1\) for all \(\beta \neq -\alpha\) from this subsystem.
\end{itemize}

Informally, the idempotence condition says that every root subgroup may be expressed in terms of the other root subgroups with the roots from any fixed root subsystem of type \(\mathsf A_2\). The firmness condition says that the only relations between the generators \(c_{\beta, \alpha - \beta}(x, y)\) of \(U_\alpha\) are the biadditivity and the corollary of (\ref{hw}), but if we consider only the roots from any fixed root subsystem of type \(\mathsf A_3\). Finally, the reducibility condition says that the elements of the root subgroups are completely determined by their conjugacy actions on other root subgroups with the roots from any root subsystem of type \(\mathsf A_2\).

We also say that \(\Phi\)-commutator relations are \(K\)-\textit{linear} for a unital commutative ring \(K\) if the abelian groups \(U_\alpha\) have structures of unital \(K\)-modules and the maps \(c_{\alpha \beta}\) are \(K\)-bilinear.

\begin{lemma} \label{gl-roots}
Let \(R\) be a \(K\)-algebra with a Peirce decomposition of rank \(\ell \geq 1\). Then \(\glin(R)\), \(\elin(R)\), and \(\stlin(R)\) have \(K\)-linear \(\Phi\)-commutator relations with \(U_{\mathrm e_i - \mathrm e_j} = t_{ij}(R_{ij})\) or \(U_{\mathrm e_i - \mathrm e_j} = x_{ij}(R_{ij})\). If the Peirce decomposition is idempotent, firm, or reduced, then the resulting \(\Phi\)-commutator relations have the same property.
\end{lemma}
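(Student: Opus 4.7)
The plan is to translate the Steinberg relations directly into the $\Phi$-commutator axioms and then check that each of the three hypotheses on the Peirce decomposition forces the matching property on the root subgroups. First I would identify each $U_{\mathrm e_i - \mathrm e_j}$ with the additive group $R_{ij}$: since $i \neq j$ the product $R_{ij} R_{ij}$ vanishes by the Peirce orthogonality, so the group operation on $\glin(R) = R^\circ$ restricts to ordinary addition on $t_{ij}(R_{ij})$; for $\stlin(R)$ one checks that $x_{ij}$ is injective by composing with the canonical projection to $\elin(R)$. Under these identifications the Steinberg relations become precisely the $\Phi$-commutator axioms, and the bilinear maps $c_{\alpha\beta}$ reduce to the multiplications $R_{ij} \times R_{jk} \to R_{ik}$, which are visibly $K$-bilinear.

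The idempotent case is immediate: the commutator $[U_\alpha, U_\beta]$ lives in the abelian group $U_{\alpha + \beta}$, so it is the additive subgroup generated by the single commutators $t_{ik}(ab)$ and equals $t_{ik}(R_{ij} R_{jk}) = t_{ik}(R_{ik}) = U_{\alpha + \beta}$. For the reduced case I fix $\alpha = \mathrm e_i - \mathrm e_j$, a third index $k$, and an element $g = t_{ij}(x)$ centralising every other root subgroup in the $\mathsf A_2$ subsystem built from $\alpha$ and $\mathrm e_j - \mathrm e_k$. Commutation with $U_{\mathrm e_j - \mathrm e_k}$ yields $x R_{jk} = 0$ and commutation with $U_{\mathrm e_k - \mathrm e_i}$ yields $R_{ki} x = 0$; the idempotence identities $R_{jm} = R_{jk} R_{km}$ and $R_{mi} = R_{mk} R_{ki}$ propagate these to $xR = Rx = 0$, and the reducibility of $R$ then forces $x = 0$.

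The firm case is the main technical point. For an $\mathsf A_3$ basis $(\mathrm e_i - \mathrm e_j, \mathrm e_j - \mathrm e_k, \mathrm e_k - \mathrm e_l)$ the diagram in the definition becomes the multiplication map $\mu_1 + \mu_2 \colon (R_{ij} \otimes R_{jl}) \oplus (R_{ik} \otimes R_{kl}) \to R_{il}$, whose matrix image is spanned by the elements $(a \otimes bc, -ab \otimes c)$ with $a \in R_{ij},\, b \in R_{jk},\, c \in R_{kl}$ together with $(ab \otimes c, -a \otimes bc)$ with $a \in R_{ik},\, b \in R_{kj},\, c \in R_{jl}$. This image plainly lies in the kernel. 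For the reverse inclusion I would proceed in two steps. First, writing an arbitrary $y \in R_{jj} = R_{jk} R_{kj}$ as $\sum_s u_s v_s$ by idempotence, the defining relation $x y \otimes z - x \otimes yz \in \Ker \mu_1$ is exhibited as the first coordinate of the difference of a second-column element at $(x u_s, v_s, z)$ and a first-column element at $(x, u_s, v_s z)$, so that $\Ker \mu_1 \oplus 0$, and symmetrically $0 \oplus \Ker \mu_2$, lie in the matrix image. Second, firmness of the Peirce decomposition identifies $(R_{ij} \otimes R_{jl}) / \Ker \mu_1$ and $(R_{ik} \otimes R_{kl}) / \Ker \mu_2$ with $R_{il}$, so the kernel of $\mu_1 + \mu_2$ modulo $\Ker \mu_1 \oplus \Ker \mu_2$ becomes the antidiagonal of $R_{il} \oplus R_{il}$, that is $R_{il}$; the second column of the matrix projects onto this via the iterated idempotence identity $R_{ik} R_{kj} R_{jl} = R_{il}$, and combining both steps gives the desired equality of image and kernel.

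The main obstacle I anticipate is this firm case, specifically the bookkeeping in the first step: it must be clear that decomposing $y \in R_{jj}$ as $\sum_s u_s v_s$ and rearranging the resulting tensors so that the two matrix columns cancel out the ``off-diagonal'' tensor products uses only the firmness (and hence idempotence) of the Peirce decomposition of $R$ itself, without reaching outside the hypotheses of the lemma.
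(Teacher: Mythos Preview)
Your proposal is correct and follows essentially the same route as the paper. The idempotent and reduced cases are handled identically, and for the firm case both arguments first show via the same telescoping identity that the \(R_{jj}\)- and \(R_{kk}\)-balancing relations \((xy\otimes z - x\otimes yz,\,0)\) lie in the matrix image; the paper then concludes by invoking Lemma~\ref{root-elim} (root elimination for the merged index \(\{j,k\}\)), whereas your direct antidiagonal argument is simply the unpacked form of that same step.
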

\begin{proof}
The only non-trivial claim is that the \(\Phi\)-commutator relations are firm if the Peirce decomposition is firm. We have to check that the kernel of
\[(m \enskip m) \colon (R_{ij} \otimes R_{jl}) \oplus (R_{ik} \otimes R_{kl}) \to R_{il}\]
coincides with the image of
\[\bigl(\begin{smallmatrix} 1 \otimes m & m \otimes 1 \\ -m \otimes 1 & -1 \otimes m \end{smallmatrix}\bigr) \colon (R_{ij} \otimes R_{jk} \otimes R_{kl}) \oplus (R_{ik} \otimes R_{kj} \otimes R_{jl}) \to (R_{ij} \otimes R_{jl}) \oplus (R_{ik} \otimes R_{kl}),\]
where \(m\) denote the multiplication homomorphisms. Notice that this image contains the images of
\begin{align*}
R_{ij} \otimes R_{jj} \otimes R_{jl} &\to R_{ij} \otimes R_{jl},\, x \otimes y \otimes z \mapsto xy \otimes z - x \otimes yz; \\
R_{ik} \otimes R_{kk} \otimes R_{kl} &\to R_{ik} \otimes R_{kl},\, x \otimes y \otimes z \mapsto xy \otimes z - x \otimes yz;
\end{align*}
since \(xyz \otimes w - x \otimes yzw = (xyz \otimes w - xy \otimes zw) + (xy \otimes zw - x \otimes yzw)\) for \(x \in R_{ij}\), \(y \in R_{jk}\), \(z \in R_{kj}\), \(w \in R_{jl}\) or \(x \in R_{ik}\), \(y \in R_{kj}\), \(z \in R_{jk}\), \(w \in R_{kl}\). Then the claim follows from lemma \ref{root-elim}.
\end{proof}

Let \(G\) be a group with firm or reduced \(K\)-linear \(\Phi\)-commutator relations. A \textit{coordinatization} of \(G\) is a group homomorphism \(\pi \colon \stlin(R) \to G\) inducing \(K\)-linear isomorphisms on the root subgroups, where \(R\) is a \(K\)-algebra with a Peirce decomposition of rank \(\ell\) and this Peirce decomposition is firm or reduced respectively. The next two lemmas show that there is at most one coordinatization in each case up to a unique isomorphism.

\begin{lemma} \label{firm-un}
Let \(\pi \colon \stlin(R) \to G\) be a coordinatization of a group with firm \(K\)-linear \(\Phi\)-commutator relations and \(\rho \colon \stlin(S) \to G\) be a group homomorphism inducing \(K\)-linear maps between the root subgroups, where \(S\) is a \(K\)-algebra with a firm Peirce decomposition of rank \(\ell \geq 3\). Then there is a unique \(K\)-algebra homomorphism \(f \colon S \to R\) preserving the Peirce decomposition and such that \(\rho = \pi \circ \stlin(f)\).
\end{lemma}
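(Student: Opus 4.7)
The plan is to construct \(f\) component by component and then argue multiplicativity. Since \(\pi\) is a coordinatization, for each \(i \neq j\) it restricts to a \(K\)-linear isomorphism \(x_{ij}(R_{ij}) \to U_{\mathrm e_i - \mathrm e_j}\), so I would define \(f_{ij} \colon S_{ij} \to R_{ij}\) as the unique \(K\)-linear map satisfying \(\pi(x_{ij}(f_{ij}(s))) = \rho(x_{ij}(s))\). These components are forced by the requirement \(\rho = \pi \circ \stlin(f)\), giving uniqueness on the off-diagonal blocks. On a diagonal block \(S_{ii}\) the component \(f_{ii}\) is then forced by the requirement that \(f\) be a ring homomorphism together with the equality \(S_{ii} = \sum_{k \neq i} S_{ik} S_{ki}\) (which holds because a firm Peirce decomposition is idempotent), giving full uniqueness.

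For existence, I first establish the off-diagonal multiplicativity \(f_{ik}(ac) = f_{ij}(a)\, f_{jk}(c)\) for distinct \(i, j, k\) and \(a \in S_{ij}\), \(c \in S_{jk}\). Applying \(\rho\) to the Steinberg relation \([x_{ij}(a), x_{jk}(c)] = x_{ik}(ac)\) in \(\stlin(S)\) and rewriting via the definition of \(f_{ij}\), \(f_{jk}\) and the analogous Steinberg relation inside \(\stlin(R)\) yields \(\pi(x_{ik}(f_{ij}(a) f_{jk}(c))) = \pi(x_{ik}(f_{ik}(ac)))\); injectivity of \(\pi\) on \(x_{ik}(R_{ik})\) finishes the step. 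Next, to define \(f_{ii}\), I fix any \(j \neq i\) and consider the \(\mathbb Z\)-bilinear map \(S_{ij} \times S_{ji} \to R_{ii}\), \((a, b) \mapsto f_{ij}(a) f_{ji}(b)\). Since \(\ell \geq 3\), I can pick a third index \(k \neq i, j\), and for an element \(s = s_1 s_2 \in S_{jk} S_{kj}\) the off-diagonal multiplicativity combined with associativity in \(R\) gives \(f_{ij}(a s)\, f_{ji}(b) = f_{ij}(a)\, f_{ji}(s b)\). Because the Peirce decomposition of \(S\) is idempotent, one checks that \(S_{jj}\) is already spanned by products from \(S_{jk} S_{kj}\) with \(k \neq i\), so the bilinear map factors through \(S_{ij} \otimes_{S_{jj}} S_{ji}\). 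By firmness of \(S\), this tensor product is canonically isomorphic to \(S_{ii}\), yielding the desired \(f_{ii} \colon S_{ii} \to R_{ii}\); an analogous three-index computation shows the result is independent of the choice of \(j\).

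It then remains to verify that \(f = \bigoplus_{i, j} f_{ij}\) is a ring homomorphism, which I would do case by case on the Peirce pieces of \(a\) and \(b\). The cases with both factors off-diagonal are settled above; the cases with one or both factors diagonal reduce, after expanding each diagonal element as \(\sum a_\mu c_\mu\) with \(a_\mu \in S_{ij}\), \(c_\mu \in S_{ji}\) for a suitable auxiliary index \(j\), to rearrangements using the off-diagonal multiplicativity and associativity in \(R\). The main obstacle is exactly the well-definedness of \(f_{ii}\): a naive rule \(\sum a_\mu c_\mu \mapsto \sum f_{ij}(a_\mu) f_{ji}(c_\mu)\) demands precisely the cancellations that firmness of \(S\) guarantees, and this is where both the firmness hypothesis and the rank \(\ell \geq 3\) are essential. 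The \(K\)-linearity of \(f\) follows from the \(K\)-linearity of each \(f_{ij}\) and the \(K\)-bilinearity of multiplication in \(R\), and the identity \(\rho = \pi \circ \stlin(f)\) holds on the generators \(x_{ij}(s)\) by construction, hence on all of \(\stlin(S)\).
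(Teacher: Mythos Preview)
Your proposal is correct and follows essentially the same route as the paper: both construct the off-diagonal components \(f_{ij}\) from \(\pi^{-1}\circ\rho\), verify \(f_{ik}(ac)=f_{ij}(a)f_{jk}(c)\) via the Steinberg relation, and then use firmness of \(S\) (together with an auxiliary index \(k\neq i,j\), available since \(\ell\geq 3\)) to show that \((a,b)\mapsto f_{ij}(a)f_{ji}(b)\) descends to a well-defined map \(S_{ii}\cong S_{ij}\otimes_{S_{jj}}S_{ji}\to R_{ii}\) independent of \(j\). The paper phrases the balancing step as the identity \(f_{ii}^{j}(x\otimes yzw)=f_{ii}^{j}(xyz\otimes w)\), which is exactly your \(f_{ij}(as)f_{ji}(b)=f_{ij}(a)f_{ji}(sb)\) for \(s=yz\in S_{jk}S_{kj}=S_{jj}\).
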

\begin{proof}
We have to construct the \(K\)-linear homomorphisms \(f_{ij} \colon S_{ij} \to R_{ij}\). If \(i \neq j\), then they are uniquely determined by the maps \(\pi\) and \(\rho\) and \(f_{ik}(xy) = f_{ij}(x)\, f_{jk}(y)\) for \(x \in S_{ij}\), \(y \in S_{jk}\), and distinct \(i\), \(j\), \(k\).

Let
\[f_{ii}^j \colon S_{ij} \otimes S_{ji} \to R_{ii}, x \otimes y \mapsto f_{ij}(x)\, f_{ji}(y)\]
for \(i \neq j\). Since
\[f_{ii}^j(x \otimes yz) = f_{ii}^k(xy \otimes z)\]
for \(x \in S_{ij}\), \(y \in S_{jk}\), \(z \in S_{ki}\), and distinct \(i\), \(j\), \(k\), we get
\[f_{ii}^j(x \otimes yzw) = f_{ii}^j(xyz \otimes w)\]
for \(x \in S_{ij}\), \(y \in S_{jk}\), \(z \in S_{kj}\), \(w \in S_{ji}\), and distinct \(i\), \(j\), \(k\). Using that the Peirce decomposition of \(S\) is firm we obtain that there are unique homomorphisms \(f_{ii} \colon S_{ii} \to R_{ii}\) such that \(f_{ii}^j(x \otimes y) = f_{ii}(xy)\). It is easy to check that the resulting map \(f = \bigoplus_{ij} f_{ij}\) is a \(K\)-algebra homomorphism. Clearly, it is unique.
\end{proof}

\begin{lemma} \label{red-un}
Let \(\pi \colon \stlin(R) \to G\) be a coordinatization of a group with reduced \(K\)-linear \(\Phi\)-commutator relations and \(\rho \colon \stlin(S) \to G\) be a group homomorphism inducing surjective \(K\)-linear maps between the root subgroups, where \(S\) is a \(K\)-algebra with an idempotent Peirce decomposition of rank \(\ell \geq 3\). Then there is a unique \(K\)-algebra homomorphism \(f \colon S \to R\) preserving the Peirce decomposition and such that \(\rho = \pi \circ \stlin(f)\), it is necessarily surjective.
\end{lemma}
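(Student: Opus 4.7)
The plan follows the outline of lemma \ref{firm-un}, constructing $f$ component by component; here the role played by firmness of $S$ in that lemma is replaced by reducedness of $R$ together with the hypothesis $\ell \geq 3$.

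First I would define the off-diagonal maps $f_{ij} \colon S_{ij} \to R_{ij}$ for $i \neq j$ exactly as before, as the unique $K$-linear maps with $\pi(x_{ij}(f_{ij}(a))) = \rho(x_{ij}(a))$; these exist because $\pi$ is a $K$-linear isomorphism on root subgroups, and they are surjective because $\rho$ is surjective on root subgroups. Applying $\pi^{-1}$ to the image under $\rho$ of the Steinberg relation $[x_{ij}(a), x_{jk}(b)] = x_{ik}(ab)$ yields $f_{ik}(ab) = f_{ij}(a)\, f_{jk}(b)$ for all distinct $i, j, k$.

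For the diagonal components, fix $i$ and $j \neq i$ and try to define $f_{ii}(ab) = f_{ij}(a)\, f_{ji}(b)$ on the image of the multiplication $S_{ij} \otimes S_{ji} \to S_{ii}$; this image is all of $S_{ii}$ by idempotency of the Peirce decomposition of $S$. The main step is well-definedness: if $\sum_s a_s b_s = 0$ in $S_{ii}$, I must show that $g = \sum_s f_{ij}(a_s)\, f_{ji}(b_s)$ vanishes in $R_{ii}$. Because $R$ is reduced, it suffices to prove $Rg = gR = 0$. Choose an auxiliary index $k \neq i, j$ (possible because $\ell \geq 3$); for any $c \in S_{ki}$ the off-diagonal product relation applied twice gives
\[f_{ki}(c)\, g = \sum_s f_{kj}(c a_s)\, f_{ji}(b_s) = \sum_s f_{ki}(c a_s b_s) = f_{ki}\Bigl(c \sum_s a_s b_s\Bigr) = 0.\]
Surjectivity of $f_{ki}$ then gives $R_{ki}\, g = 0$, and idempotency of the Peirce decomposition of $R$ yields $R_{ji}\, g \subseteq R_{jk} R_{ki}\, g = 0$ and $R_{ii}\, g \subseteq R_{ik} R_{ki}\, g = 0$, so $Rg = 0$; the symmetric calculation with multiplications by elements of $R_{ik}$ gives $gR = 0$. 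The resulting $f_{ii}^j \colon S_{ii} \to R_{ii}$ is independent of the choice of $j$: given another $j' \neq i$, expanding $a \in S_{ij}$ as $\sum_t a_t a'_t$ with $a_t \in S_{ij'}$, $a'_t \in S_{j'j}$ (idempotency of $S$) and using the off-diagonal product relation twice shows $f_{ii}^j(ab) = f_{ii}^{j'}(ab)$.

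Lastly I would verify that $f = \bigoplus_{ij} f_{ij}$ is a $K$-algebra homomorphism preserving the Peirce decomposition. The only multiplications not already handled are the mixed ones involving a diagonal factor, such as $S_{ii} \cdot S_{ij}$; I dispose of these by writing the diagonal factor through a third index $k \neq i, j$ and reducing to off-diagonal relations. Uniqueness is automatic from the construction, and surjectivity of $f_{ii}$ follows from $R_{ii} = R_{ij} R_{ji} = f_{ij}(S_{ij})\, f_{ji}(S_{ji}) = f_{ii}(S_{ii})$. The principal obstacle is the well-definedness of $f_{ii}$: this is where reducedness of $R$ is indispensable and where the bound $\ell \geq 3$ enters, by allowing a third index $k$ to translate a potential diagonal annihilation into an off-diagonal identity.
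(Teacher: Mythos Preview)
Your proposal is correct and follows essentially the same route as the paper: define the off-diagonal $f_{ij}$ from the root-subgroup isomorphisms, then use surjectivity of $f_{ki}$ together with reducedness of $R$ and a third index $k$ to show that $f_{ii}^j$ factors through $S_{ii}$, check independence of $j$ via the identity $f_{ii}^j(x \otimes yz) = f_{ii}^k(xy \otimes z)$, and conclude. The paper phrases the well-definedness step slightly differently---it first records that $g \in R_{ii}$ vanishes iff $g R_{ik} = R_{ki} g = 0$ and then verifies these conditions---but this is the same computation you carry out.
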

\begin{proof}
As in the proof of lemma \ref{firm-un}, we have \(K\)-linear surjective homomorphisms \(f_{ij} \colon S_{ij} \to R_{ij}\) for \(i \neq j\), satisfying \(f_{ik}(xy) = f_{ij}(x)\, f_{jk}(y)\) for distinct \(i, j, k\). Let
\[f_{ii}^j \colon S_{ij} \otimes S_{ji} \to R_{ii}, x \otimes y \mapsto f_{ij}(x)\, f_{ji}(y)\]
for \(i \neq j\). Since \(\sum_{t \in T} x_t y_t = 0\) for \(x_t \in R_{ij}\), \(y_t \in R_{ji}\) if and only if \(\sum_{t \in T} x_t y_t z = 0\) for all \(z \in R_{ik}\) and \(\sum_{t \in T} w x_y y_t = 0\) for all \(w \in R_{ki}\), where \(i\), \(j\), \(k\) are distinct, and the Peirce decomposition of \(R\) is reduced, the maps \(f_{ii}^j\) factor through \(S_{ii}\). The resulting homomorphisms \(f_{ii} \colon S_{ii} \to R_{ii}\) are independent on \(j\) since \(f_{ii}^j(x \otimes yz) = f_{ii}^k(xy \otimes z)\) for \(x \in S_{ij}\), \(y \in S_{jk}\), \(z \in S_{ki}\), and distinct \(i\), \(j\), \(k\). It is easy to check that the map \(f = \bigoplus_{ij} f_{ij}\) is a \(K\)-algebra homomorphism. Clearly, it is unique and surjective.
\end{proof}

\section{Coordinatization theorem}

In this section we prove that that coordinatizations always exist for \(\ell \geq 4\). For convenience some parts of the proofs are given in separate lemmas.

\begin{lemma} \label{ass}
Let \(R = \bigoplus_{1 \leq i, j \leq \ell} R_{ij}\) be an abelian group with the multiplication homomorphisms \(R_{ij} \times R_{jk} \to R_{ik}\) for \(\ell \geq 4\). Suppose that \(R_{ij} R_{jk} = R_{ik}\) for \(i \neq j \neq k\) (but possibly \(i = k\)) and the associativity rule \((xy)z = x(yz)\) holds for \(x \in R_{ij}\), \(y \in R_{jk}\), \(z \in R_{kl}\), where the indices are distinct; \(i \neq j \neq k \neq l = i\); \(i \neq j \neq l \neq k = i\); \(j \neq i \neq k \neq l = j\); \(i = k = l \neq j\); or \(i = j = l \neq k\). Then \(R\) is an associative ring with an idempotent Peirce decomposition.
\end{lemma}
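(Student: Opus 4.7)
The content reduces to verifying $(xy)z = x(yz)$ for every triple $(x,y,z) \in R_{ij} \times R_{jk} \times R_{kl}$, since the Peirce axioms $R_{ij}R_{kl}=0$ for $j\neq k$ and $R_{ij}R_{jk}\leq R_{ik}$ together with idempotence are already built into the hypotheses. Associativity is given for the listed index patterns; my plan is to extend it to every remaining pattern by rewriting one of the three arguments as a sum of products passing through a fresh intermediate index, then applying only the hypothesized associativities to shuffle parentheses on both sides until they match.

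The single tool is that for any $p$ and any $m \neq p$ the equality $R_{pp} = R_{pm}R_{mp}$ holds, so any diagonal element can be split along a fresh intermediary. Because $\ell \geq 4$, given any quadruple of slot-indices $(i,j,k,l)$ in a triple product there is always an index $m \in \{1,\dots,\ell\}$ avoiding $\{i,j,k,l\}$, which is the only combinatorial input needed.

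The recipe for a missing pattern is uniform: pick $m$ outside the indices appearing in $(i,j,k,l)$, refactor a suitably chosen argument (typically the one sitting in a diagonal $R_{pp}$) as $\sum u_t v_t$ with $u_t, v_t$ off-diagonal, and then rearrange each of $(xy)z$ and $x(yz)$ using two or three of the hypothesized associativities. For example, for the uncovered pattern $(i,i,k,l)$ with $i,k,l$ distinct, write $x=\sum x_{1t}x_{2t}$ with $x_{1t}\in R_{im}$, $x_{2t}\in R_{mi}$ for some $m \notin \{i,k,l\}$; then
\[
(xy)z = \sum ((x_{1t}x_{2t})y)z = \sum (x_{1t}(x_{2t}y))z = \sum x_{1t}((x_{2t}y)z),
\]
invoking the covered patterns $(i,m,i,k)$ and $(i,m,k,l)$, and symmetrically
\[
x(yz) = \sum (x_{1t}x_{2t})(yz) = \sum x_{1t}(x_{2t}(yz)) = \sum x_{1t}((x_{2t}y)z)
\]
via the covered patterns $(i,m,i,l)$ and $(m,i,k,l)$, so the two sides agree.

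I would process the missing patterns in order of increasing degeneracy: first the three-distinct-indices patterns $(i,i,k,l)$, $(i,j,j,l)$, $(i,j,k,k)$ (each reducible directly to covered patterns by one refactoring, as illustrated above); then the two-distinct-indices patterns $(i,i,k,k)$, $(i,j,i,j)$, $(i,j,j,i)$, $(i,j,j,j)$, $(i,i,i,l)$, whose refactorings produce intermediate patterns handled in the previous stage; and finally the all-equal pattern $(i,i,i,i)$, for which splitting the middle argument $y \in R_{ii}$ as $\sum y_{1t}y_{2t}$ with $y_{1t} \in R_{im}$, $y_{2t} \in R_{mi}$ reduces both sides to $\sum (xy_{1t})(y_{2t}z)$ via the covered diagonal patterns $(i,i,m,i)$ and $(i,m,i,i)$. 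The only real obstacle is the bookkeeping: for each missing pattern I must pick the correct argument to split so that every intermediate associativity invoked actually lies on the hypothesized list; splitting the wrong factor typically leads to an intermediate pattern that is itself still uncovered, so the order in which the cases are tackled matters.
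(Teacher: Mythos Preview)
Your strategy is the same as the paper's: factor one argument through a fresh index and chain only hypothesised associativities. The paper's organisation is a bit tighter, though. It first proves only the two cases \(i=j\) (three distinct values) and its mirror \(k=l\), and then dispatches the remaining seven patterns in a single stroke by writing the \emph{middle} factor as \(R_{jk}=R_{js}R_{sk}\) with \(s\notin\{i,j,k,l\}\) and running the chain
\[
(x(yz))w=((xy)z)w=(xy)(zw)=x(y(zw))=x((yz)w);
\]
each intermediate associativity lands either in the original six hypotheses or in the two just-proved cases, regardless of the coincidence pattern among \(i,j,k,l\). This avoids your stage-by-stage bookkeeping. Note also that your ``single tool'' \(R_{pp}=R_{pm}R_{mp}\) is not literally enough: for patterns such as \((i,j,i,j)\) there is no diagonal factor to split, and you must use the full hypothesis \(R_{jk}=R_{js}R_{sk}\) on an off-diagonal block, which is exactly what the paper's middle-factor trick does.

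There is one genuine omission. You assert that ``idempotence [is] already built into the hypotheses'', but the hypothesis \(R_{ij}R_{jk}=R_{ik}\) is given only for \(i\neq j\neq k\); the conclusion ``idempotent Peirce decomposition'' requires it for \emph{all} \(i,j,k\), in particular when \(i=j\) or \(j=k\). The paper handles this at the end: once associativity is known, pick distinct \(s,t\notin\{i,j,k\}\) (available since \(\ell\geq 4\) and at most two of \(i,j,k\) are distinct in the missing cases) and compute
\[
R_{ik}=R_{it}R_{ts}R_{sk}=R_{it}R_{tj}R_{js}R_{sk}=R_{ij}R_{jk}.
\]
It is short, but it is not automatic and should be included.
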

\begin{proof}
The associativity of \(R\) means that \((xy)z = x(yz)\) for \(x \in R_{ij}\), \(y \in R_{jk}\), \(z \in R_{kl}\). Depending on the coincidences between the indices, there are \(15\) cases and \(6\) of them are already known. The case \(i = j \neq k \neq l \neq j\) follows using
\[((xy)z)w = (x(yz))w = x((yz)w) = x(y(zw)) = (xy)(zw)\]
for \(x \in R_{is}\), \(y \in R_{si}\), \(z \in R_{ik}\), \(w \in R_{kl}\), and distinct \(i\), \(k\), \(l\), \(s\); the case \(i \neq j \neq k = l \neq i\) is symmetric to this. The remaining cases \(i = k \neq j = l\); \(i \neq j = k \neq l \neq i\); \(i = l \neq j = k\); \(i = j = k \neq l\); \(i \neq j = k = l\); \(i = j \neq k = l\); and \(i = j = k = l\) follow from the known cases using
\[(x(yz))w = ((xy)z)w = (xy)(zw) = x(y(zw)) = x((yz)w)\]
for \(x \in R_{ij}\), \(y \in R_{js}\), \(z \in R_{sk}\), \(w \in R_{kl}\), and \(s \notin \{i, j, k, l\}\).

Now suppose that \(i\), \(j\), \(k\) are indices and not all of them are distinct. Take distinct \(s, t \notin \{i, j, k\}\). Then
\[R_{ik} = R_{it} R_{ts} R_{sk} = R_{it} R_{tj} R_{js} R_{sk} = R_{ij} R_{jk}.\qedhere\]
\end{proof}

\begin{theorem} \label{firm-ex}
Let \(K\) be a unital commutative ring and \(G\) be a group with firm \(K\)-linear \(\Phi\)-commutator relations, where \(\Phi\) is a root system of type \(\mathsf A_{\ell - 1}\) for \(\ell \geq 4\). Then \(G\) admits a coordinatization, it is unique up to a unique isomorphism.
\end{theorem}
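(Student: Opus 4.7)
Uniqueness follows directly from Lemma~\ref{firm-un}: given two coordinatizations \(\pi_s \colon \stlin(R_s) \to G\) for \(s = 1, 2\), applying the lemma with \((\pi, \rho) = (\pi_1, \pi_2)\) and vice versa yields unique \(K\)-algebra homomorphisms \(R_1 \leftrightarrows R_2\) preserving the Peirce decomposition, and their compositions are unique lifts of the identities.

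For existence, the plan is to reconstruct a \(K\)-algebra \(R = \bigoplus_{i, j} R_{ij}\) with a firm Peirce decomposition directly from the root group data of \(G\). Set \(R_{ij} = U_{\mathrm e_i - \mathrm e_j}\) for \(i \neq j\) as \(K\)-modules, and define multiplication \(R_{ij} \otimes_K R_{jk} \to R_{ik}\) for distinct \(i, j, k\) via the commutator map \(c_{\mathrm e_i - \mathrm e_j, \mathrm e_j - \mathrm e_k}\). Idempotence of the commutator relations gives \(R_{ij} R_{jk} = R_{ik}\), and the associativity \((xy)z = x(yz)\) for \(x \in R_{ij}\), \(y \in R_{jk}\), \(z \in R_{kl}\) with \(i, j, k, l\) distinct is the consequence \([x, [y, z]] = [[x, y], z]\) of (\ref{hw}) applied to the \(\mathsf A_3\) subsystem with basis \((\mathrm e_i - \mathrm e_j,\, \mathrm e_j - \mathrm e_k,\, \mathrm e_k - \mathrm e_l)\).

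The key construction is the diagonal \(R_{ii}\). I would define it as
\[R_{ii} = \Big(\bigoplus_{j \neq i} R_{ij} \otimes_K R_{ji}\Big) \Big/ N_i,\]
where \(N_i\) is spanned by the associativity-exchange relations \(xy \otimes z - x \otimes yz\) for \(x \in R_{ij},\, y \in R_{jk},\, z \in R_{ki}\) with \(j, k \neq i\) and \(j \neq k\) (the first term lying in \(R_{ik} \otimes R_{ki}\), the second in \(R_{ij} \otimes R_{ji}\)). The bound \(\ell \geq 4\) provides enough auxiliary indices for the construction to be consistent. Diagonal multiplications \(R_{ii} \otimes R_{ik} \to R_{ik}\), \(R_{ki} \otimes R_{ii} \to R_{ki}\), and \(R_{ii} \otimes R_{ii} \to R_{ii}\) are then defined on representatives via formulas such as \([x \otimes y] \cdot z = x(yz)\).

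The main technical obstacle is verifying that these diagonal multiplications are well-defined, that is, that the kernel of \(\bigoplus_{j \neq i} R_{ij} \otimes R_{ji} \to R_{ii}\) acts trivially on each \(R_{ik}\). This is precisely where the firmness of the \(\Phi\)-commutator relations enters: the firmness exact sequence for an \(\mathsf A_3\) subsystem through an auxiliary fourth index shows that the only relations in the relevant off-diagonal products are the associativity identities already established from (\ref{hw}). Once well-defined, the restricted associativity cases listed in Lemma~\ref{ass} are either instances of Hall-Witt (for four distinct indices) or built directly into the construction of \(R_{ii}\) (for cases with repeated indices), so Lemma~\ref{ass} yields an associative \(K\)-algebra with idempotent Peirce decomposition. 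Firmness of the Peirce decomposition, namely \(R_{ij} \otimes_{R_{jj}} R_{jk} \cong R_{ik}\), is a direct translation of the firmness hypothesis for \(i \neq k\) and is built into the definition of \(R_{ii}\) for \(i = k\). Finally, the coordinatization \(\pi \colon \stlin(R) \to G\) sending \(x_{ij}(a) \mapsto a \in U_{\mathrm e_i - \mathrm e_j}\) is a group homomorphism because the Steinberg relations of \(\stlin(R)\) are exactly the \(\Phi\)-commutator relations of \(G\) under the identification \([x, y] = xy\).
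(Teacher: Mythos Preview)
Your approach is essentially the paper's: same definition of \(R_{ij}\) for \(i\neq j\), same quotient construction of the diagonal \(R_{ii}\), same use of the Hall--Witt identity for the four-index associativity, and the same appeal to Lemma~\ref{ass}. The ``main technical obstacle'' you identify is exactly what the paper isolates as Lemma~\ref{r-cons}: one must show that, for any fixed pair \(i\neq j\) distinct from \(s\), the two summands \(R_{si}\otimes R_{is}\) and \(R_{sj}\otimes R_{js}\), together with only \(A_{sijs}+A_{sjis}\), already present \(R_{ss}\). This is where firmness of the \(\Phi\)-commutator relations is used, precisely as you say.

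The one place where you are too quick is the claim that firmness of the Peirce decomposition is ``a direct translation of the firmness hypothesis for \(i\neq k\)''. The firmness hypothesis concerns four \emph{distinct} indices and says nothing about tensoring over a diagonal \(R_{jj}\), which did not exist before you constructed it. The paper does not verify \(R_{ij}\otimes_{R_{jj}}R_{jk}\cong R_{ik}\) directly: instead it observes that the firmness hypothesis together with Lemma~\ref{r-cons} makes the \emph{ring} \(R\) firm (i.e.\ \(R\otimes_R R\to R\) is an isomorphism), and then invokes Lemma~\ref{univ-ring}, which says that for an idempotent Peirce decomposition, firmness of the ring is equivalent to firmness of the Peirce decomposition. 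Similarly, ``built into the definition of \(R_{ii}\) for \(i=k\)'' only becomes true after Lemma~\ref{r-cons} identifies \(R_{ii}\) with the two-summand quotient. So your plan is correct, but this step needs Lemma~\ref{univ-ring} (or an equivalent argument) rather than being immediate.
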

\begin{proof}
The uniqueness follows from lemma \ref{firm-un}. Let \(R_{ij} = U_{\mathrm e_i - \mathrm e_j}\) for \(i \neq j\) and \(m \colon R_{ij} \otimes R_{jk} \to R_{ik},\, x \otimes y \mapsto xy\) be the multiplication maps induced by the commutators. They are bilinear and \((xy)z = x(yz)\) for \(x \in R_{ij}\), \(y \in R_{jk}\), \(z \in R_{kl}\), and distinct \(i\), \(j\), \(k\), \(l\).

For distinct indices \(i\), \(j\), \(s\) let \(A_{sijs}\) be the image of
\[\bigl(\begin{smallmatrix} 1 \otimes m \\ -m \otimes 1 \end{smallmatrix}\bigr) \colon R_{si} \otimes R_{ij} \otimes R_{js} \to (R_{si} \otimes R_{is}) \oplus (R_{sj} \otimes R_{js})\]
and
\[R_{ss} = \bigoplus_{i \neq s} (R_{si} \otimes R_{is}) / \sum_{s \neq i \neq j \neq s} A_{sijs}.\]

Now we construct the multiplication on \(R\). By lemma \ref{r-cons} below and the identity
\[x((yz)w) = x(y(zw)) = (xy)(zw)\]
for \(x \in R_{ik}\), \(y \in R_{kl}\), \(z \in R_{li}\), \(w \in R_{ij}\) with distinct \(i\), \(j\), \(k\), \(l\) there are unique homomorphisms \(R_{ii} \otimes R_{ij} \to R_{ij}\) for all \(i \neq j\) such that the associativity holds for \(R_{ik} \otimes R_{ki} \otimes R_{ij} \to R_{ij}\) with distinct \(i\), \(j\), \(k\). Similarly, there are the multiplications \(R_{ij} \otimes R_{jj} \to R_{ij}\) for \(i \neq j\). Finally, by lemma \ref{r-cons} and the identity
\[x(y(zw)) = x((yz)w) = (x(yz))w = ((xy)z)w\]
for \(x \in R_{ij}\), \(y \in R_{ji}\), \(z \in R_{ik}\), \(w \in R_{ki}\) with distinct \(i\), \(j\), \(k\) there are unique homomorphisms \(R_{ii} \otimes R_{ii} \to R_{ii}\) such that the associativity holds for \(R_{ij} \otimes R_{ji} \otimes R_{ii} \to R_{ii}\) and \(R_{ii} \otimes R_{ij} \otimes R_{ji} \to R_{ii}\) with \(i \neq j\). The resulting multiplication is associative by lemma \ref{ass}, i.e. \(R\) is a ring with an idempotent Peirce decomposition.

The ring \(R\) is firm since the \(\Phi\)-commutator relations are firm and lemma \ref{r-cons} holds. Then the Peirce decomposition of \(R\) is also firm by lemma \ref{univ-ring}. Since \(R_{ii} \cong R_{ij} \otimes_{R_{jj}} R_{ji}\), \(R_{ij} = R_{ij} R_{jk} R_{kj}\), and \(R_{ji} = R_{jk} R_{kj} R_{ji}\) for distinct \(i\), \(j\), \(k\), there is a unique \(K\)-module structure on \(R_{ii}\) such that the multiplication \(R_{ij} \times R_{ji} \to R_{ii}\) is bilinear. Moreover, it is independent on \(j\) since \(R_{ii} = R_{ij} R_{jk} R_{ki}\) for distinct \(i\), \(j\), \(k\). The multiplication map \(R_{ii} \times R_{ij} \to R_{ij}\) is \(K\)-bilinear since \(R_{ii} = R_{ik} R_{ki}\) for all distinct \(i\), \(j\), \(k\), and the multiplication \(R_{ij} \times R_{jj} \to R_{ij}\) is \(K\)-bilinear by the symmetry. Finally, the multiplication \(R_{ii} \times R_{ii} \to R_{ii}\) is \(K\)-bilinear since \(R_{ii} = R_{ij} R_{ji}\) for all \(i \neq j\).
\end{proof}

\begin{lemma} \label{r-cons}
The canonical maps \(\bigl((R_{si} \otimes R_{is}) \oplus (R_{sj} \otimes R_{js})\bigr) / (A_{sijs} + A_{sjis}) \to R_{ss}\) are bijective for distinct \(i\), \(j\), \(s\).
\end{lemma}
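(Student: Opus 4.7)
The plan is to construct an explicit two-sided inverse for the canonical map $\iota \colon (X_i \oplus X_j)/(A_{sijs} + A_{sjis}) \to R_{ss}$, writing $X_k = R_{sk} \otimes R_{ks}$. Surjectivity of $\iota$ is easy: for any $k \neq s, i$ the idempotence $R_{sk} = R_{si} R_{ik}$ lets one rewrite $\sum a_t b_t \otimes c \in X_k$ as $\sum a_t \otimes b_t c \in X_i$ modulo $A_{siks}$, so every element of $R_{ss}$ already has a representative in $X_i$. The real content is injectivity, for which I build a map $\bar\psi \colon R_{ss} \to (X_i \oplus X_j)/(A_{sijs} + A_{sjis})$ by assembling component maps $\psi_k$ on each $X_k$.

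I take $\psi_i$ and $\psi_j$ to be the obvious inclusions; for $k \notin \{s, i, j\}$ I decompose $a \in R_{sk}$ as $\sum a_t b_t$ with $a_t \in R_{si}$, $b_t \in R_{ik}$ via idempotence and set $\psi_k(a \otimes c) = (\sum a_t \otimes b_t c, 0)$. The main obstacle is well-definedness of $\psi_k$: given $\sum a_t b_t = 0$ in $R_{sk}$, one must show $(\sum a_t \otimes b_t c, 0) \in A_{sijs} + A_{sjis}$. Here I apply firmness of the $\Phi$-commutator relations to the $\mathsf A_3$-subsystem on the four distinct indices $\{s, i, j, k\}$ (available because $\ell \geq 4$) with base $(\mathrm e_s - \mathrm e_i, \mathrm e_i - \mathrm e_j, \mathrm e_j - \mathrm e_k)$: the kernel of $(m\;m)\colon(R_{si}\otimes R_{ik})\oplus(R_{sj}\otimes R_{jk})\to R_{sk}$ contains $(\sum a_t\otimes b_t,\,0)$, and firmness produces
\[\sum a_t\otimes b_t = \sum x_p\otimes y_p z_p + \sum x'_q y'_q\otimes z'_q, \qquad \sum x_p y_p\otimes z_p + \sum x'_q\otimes y'_q z'_q = 0,\]
for some $x_p\otimes y_p\otimes z_p\in R_{si}\otimes R_{ij}\otimes R_{jk}$ and $x'_q\otimes y'_q\otimes z'_q\in R_{sj}\otimes R_{ji}\otimes R_{ik}$. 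Multiplying these through by $c$ on the right and recognising the generators $(x_p\otimes y_p(z_p c),\,-x_p y_p\otimes z_p c)\in A_{sijs}$ and $(x'_q y'_q\otimes z'_q c,\,-x'_q\otimes y'_q(z'_q c))\in A_{sjis}$, one obtains $(\sum a_t\otimes b_t c,\,0)\in A_{sijs}+A_{sjis}$, the residual $X_j$-component cancelling precisely by the compatibility relation.

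The remaining checks that $\psi$ descends through $R_{ss}$ are routine: it kills $A_{sijs}$ and $A_{sjis}$ by construction, kills $A_{sils}$ for $l\notin\{s,i,j\}$ via the tautological decomposition $ab = a\cdot b$, kills $A_{sjls}$ for $l\notin\{s,i,j\}$ by the same firmness argument applied to $\{s,i,j,l\}$, and kills $A_{skls}$ for $k,l\notin\{s,i,j\}$ by the associativity $(b_t b)c = b_t(bc)$ on four distinct indices. Both compositions $\bar\psi\circ\iota$ and $\iota\circ\bar\psi$ are then identities, the latter needing just one application of $A_{siks}$ to identify $\sum a_t\otimes b_t c\in X_i$ with $\sum a_t b_t\otimes c = a\otimes c\in X_k$ inside $R_{ss}$.
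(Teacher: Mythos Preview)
Your construction of an explicit inverse is sound and uses firmness in essentially the same way the paper does, but your case enumeration for ``$\psi$ kills all $A_{spqs}$'' is incomplete: you list $A_{sijs}$, $A_{sjis}$, $A_{sils}$, $A_{sjls}$, and $A_{skls}$, but omit $A_{slis}$ and $A_{sljs}$ for $l \notin \{s,i,j\}$. These are not covered by your stated arguments. For instance, for $A_{slis}$ one must compare $\psi_l(x \otimes yz)$ (computed via an $i$-decomposition $x = \sum a_t b_t$ of $x \in R_{sl}$) with $\psi_i(xy \otimes z)$; the ``associativity on four distinct indices'' trick that handled $A_{skls}$ fails here because one would need $(a_t b_t)y = a_t(b_t y)$ with $b_t \in R_{il}$, $y \in R_{li}$, and $b_t y$ lives in the as-yet-undefined $R_{ii}$. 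The fix is to notice that your well-definedness argument actually proves more than you state: $\psi_l$ computed via an $i$-decomposition agrees modulo $A_{sijs}+A_{sjis}$ with $\psi_l$ computed via a $j$-decomposition (this is the full content of firmness applied to general kernel elements of $(m\ m)$ on $(R_{si}\otimes R_{il})\oplus(R_{sj}\otimes R_{jl})$, not just those supported in the first summand). Once this symmetric description of $\psi_l$ is in hand, $A_{slis}$ and $A_{sljs}$ become tautological via the $j$- and $i$-decompositions respectively, just as $A_{sils}$ was.

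The paper organizes the argument differently. Rather than building an inverse, it first proves two inclusion relations $A_{spqs} \leq A_{sprs} + A_{srqs}$ and $A_{spqs} \leq A_{srps} + A_{srqs}$ for any distinct $s,p,q,r$, by elementary tensor identities that do not use firmness. These let one collapse the full relation subgroup $\sum_{p,q} A_{spqs}$ down to $A_{sijs} + A_{sjis} + \sum_{p \notin\{s,i,j\}} A_{sjps}$. Surjectivity is then immediate from $X_p \leq X_j + A_{sjps}$, and injectivity reduces to the single claim $A_{sjps} \cap X_j \leq A_{sijs} + A_{sjis}$, which is proved by exactly your firmness-plus-right-exactness computation. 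The payoff is cleaner bookkeeping: all seven of your ``routine'' case checks (including the two you omitted) are absorbed into the two inclusion lemmas, and firmness is invoked exactly once rather than in several guises.
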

\begin{proof}
Fix distinct indices \(s\), \(i\), \(j\). The identity
\[x \otimes (yz)w - x(yz) \otimes w = (x \otimes y(zw) - xy \otimes zw) + (xy \otimes zw - (xy)z \otimes w)\]
for \(x \in R_{sp}\), \(y \in R_{pr}\), \(z \in R_{rq}\), \(w \in R_{qs}\) implies that \(A_{spqs} \leq A_{sprs} + A_{srqs}\) and the identity
\[xy \otimes zw - (xy)z \otimes w = (xy \otimes zw - x \otimes y(zw)) - (x(yz) \otimes w - x \otimes (yz)w)\]
for \(x \in R_{sr}\), \(y \in R_{rp}\), \(z \in R_{pq}\), \(w \in R_{qs}\) implies that \(A_{spqs} \leq A_{srps} + A_{srqs}\) for distinct \(s\), \(p\), \(q\), \(r\). It follows that
\[\sum_{s \neq p \neq q \neq s} A_{spqs} = \sum_{s \neq p \neq i} (A_{sips} + A_{spis}) = A_{sijs} + A_{sjis} + \sum_{p \notin \{s, i, j\}} A_{sjps}.\]

From \(R_{sp} = R_{sj} R_{jp}\) we obtain \(R_{sp} \otimes R_{ps} \leq R_{sj} \otimes R_{js} + A_{sjps}\). It remains to show that
\[A_{sjps} \cap (R_{sj} \otimes R_{js}) \leq A_{sijs} + A_{sjis}\]
for \(p \notin \{s, i, j\}\). Indeed, let \(\sum_{t \in T} a_t \otimes b_t c_t \in R_{sj} \otimes R_{js}\) be such that \(a_t \in R_{sj}\), \(b_t \in R_{jp}\), \(c_t \in R_{ps}\), and \(\sum_{t \in T} a_t b_t \otimes c_t = 0\). Since the \(\Phi\)-commutator relations of \(G\) are firm and right exact sequences are preserved under tensor products, there are \(x_h \in R_{si}\), \(y_h \in R_{ij}\), \(z_h \in R_{jp}\), \(w_h \in R_{ps}\), \(x'_{h'} \in R_{sj}\), \(y'_{h'} \in R_{ji}\), \(z'_{h'} \in R_{ip}\), \(w'_{h'} \in R_{ps}\) such that
\[\sum_{t \in T} a_t \otimes b_t \otimes c_t = \sum_{h \in H} (x_h y_h \otimes z_h \otimes w_h - x_h \otimes y_h z_h \otimes w_h) + \sum_{h' \in H'} (x'_{h'} \otimes y'_{h'} z'_{h'} \otimes w'_{h'} - x'_{h'} y'_{h'} \otimes z'_{h'} \otimes w'_{h'})\]
It follows that \(\sum_{t \in T} a_t \otimes b_t c_t \in A_{sijs} + A_{sjis}\).
\end{proof}

\begin{theorem} \label{red-ex}
Let \(K\) be a unital commutative ring and \(G\) be a group with reduced \(K\)-linear \(\Phi\)-commutator relations, where \(\Phi\) is a root system of type \(\mathsf A_{\ell - 1}\) for \(\ell \geq 4\). Then \(G\) admits a coordinatization, it is unique up to a unique isomorphism.
\end{theorem}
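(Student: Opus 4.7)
The uniqueness is immediate from lemma \ref{red-un}, so the task is to build a \(K\)-algebra \(R\) with a reduced Peirce decomposition of rank \(\ell\) together with a homomorphism \(\stlin(R) \to G\) that induces \(K\)-linear isomorphisms on every root subgroup. The plan is to first build an auxiliary \(K\)-algebra \(S\) with an idempotent Peirce decomposition in the spirit of the proof of theorem \ref{firm-ex}, and then pass to the reduced quotient \(R = S / I\), where \(I = \{x \in S \mid xS = Sx = 0\}\), using lemma \ref{univ-ring}.

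The construction of \(S\) follows the template of theorem \ref{firm-ex}. Set \(S_{ij} = U_{\mathrm e_i - \mathrm e_j}\) for \(i \neq j\), with the \(K\)-bilinear multiplications induced by the commutator maps; these already satisfy the four-index associativity (from the \(\mathsf A_3\)-identity) and the idempotency \(S_{ik} = S_{ij} S_{jk}\) for \(i \neq j \neq k\). Define the diagonal pieces exactly as in theorem \ref{firm-ex} by \(S_{ii} = \bigoplus_{j \neq i} (S_{ij} \otimes S_{ji}) / \sum_{i \neq j \neq k \neq i} A_{ijki}\). The multiplications \(S_{ii} \otimes S_{ij} \to S_{ij}\), \(S_{ij} \otimes S_{jj} \to S_{ij}\), and \(S_{ii} \otimes S_{ii} \to S_{ii}\) are then put in place by using idempotency to rewrite any argument in \(S_{ij}\) as a product from \(S_{il} S_{lj}\) with \(l \notin \{i, j\}\), which reduces each product to an iterated inter-Peirce multiplication where the four-index associativity is already available. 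Once these are defined, lemma \ref{ass} gives that \(S\) is an associative \(K\)-algebra with an idempotent Peirce decomposition.

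With \(S\) in hand, lemma \ref{univ-ring} produces the \(K\)-algebra \(R = S / I\) with a reduced Peirce decomposition. It only remains to check that the root subgroups survive the quotient, i.e.\ that \(I \cap S_{ij} = 0\) for \(i \neq j\). If \(x \in S_{ij}\) with \(xS = Sx = 0\), then in particular \(x \cdot S_{jk} = 0\) and \(S_{ki} \cdot x = 0\) for any chosen \(k \notin \{i, j\}\); unwinding the commutator interpretation of the multiplication, these say exactly that \([x, U_{\mathrm e_j - \mathrm e_k}] = 1\) and \([x, U_{\mathrm e_k - \mathrm e_i}] = 1\). These are the only commutator conditions that an element of \(U_{\mathrm e_i - \mathrm e_j}\) can impose non-trivially inside the \(\mathsf A_2\)-subsystem \(\{\pm(\mathrm e_i - \mathrm e_j), \pm(\mathrm e_j - \mathrm e_k), \pm(\mathrm e_i - \mathrm e_k)\}\), so reducedness of the \(\Phi\)-commutator relations forces \(x = 0\). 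Hence \(R_{ij} \cong S_{ij} = U_{\mathrm e_i - \mathrm e_j}\) for \(i \neq j\), and the canonical map \(\stlin(R) \to G\) is the required coordinatization.

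The main obstacle will be the well-definedness of the multiplications involving \(S_{ii}\) in the second step. In the firm case this was the content of lemma \ref{r-cons}, whose proof explicitly invokes firmness; without firmness, the formula \((x \otimes y) \cdot z = x(yz)\) on a representative of an element of \(S_{ii}\) may a priori depend on the representative. I expect two viable workarounds: either replace \(S_{ii}\) by its image inside \(\End(\bigoplus_{k \neq i} S_{ki})^{\op} \times \End(\bigoplus_{k \neq i} S_{ik})\) under the left and right multiplication actions (extending to the exceptional component \(k = j\) via the decomposition \(S_{ij} = S_{il} S_{lj}\)), or allow multiplications that are only defined modulo the ideal \(I\) and let reducedness of \(G\) enforce well-definedness in the quotient \(R = S / I\); in both incarnations the reducedness hypothesis is precisely what makes the construction close up.
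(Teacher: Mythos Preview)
Your main line of argument---build an intermediate ring \(S\) by copying the tensor-quotient construction of theorem \ref{firm-ex} and then pass to \(R = S/I\)---does not close up, and you correctly identify where it breaks: the multiplications \(S_{ii} \otimes S_{ij} \to S_{ij}\) need lemma \ref{r-cons} to be well defined, and the proof of lemma \ref{r-cons} genuinely uses firmness in its last paragraph. Your workaround (b) is circular: the ideal \(I\) is defined via the ring multiplication on \(S\), so you cannot ``allow multiplications that are only defined modulo \(I\)'' before \(S\) is a ring. Thus as written the proposal is an outline with an acknowledged gap rather than a proof.

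Your workaround (a) is exactly the paper's route, but it needs to be executed, not just named. The paper abandons the tensor-quotient definition of the diagonal pieces entirely and instead defines \(R_{ss}\) directly as the subring of
\[
E = \prod_{i \neq s} \bigl(\End(R_{is})^{\op} \times \End(R_{si})\bigr)
\]
generated by explicit elements \(\langle x, y, z \rangle_{ij}\) for \(x \in R_{si}\), \(y \in R_{ij}\), \(z \in R_{js}\), whose components are written out using only four-index associativity (with the ``bad'' components handled by regrouping). The technical core is then a separate lemma (the paper's lemma \ref{r-gen}) showing that the generators \(\langle x, y, z \rangle_{ij}\) for any fixed pair \((i,j)\) already span \(R_{ss}\), and that each single projection \(R_{ss} \to \End(R_{is})^{\op} \times \End(R_{si})\) is injective; this is what replaces lemma \ref{r-cons}. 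With this construction the multiplications \(R_{ss} \times R_{si} \to R_{si}\) and \(R_{is} \times R_{ss} \to R_{is}\) come for free from \(E\), the maps \(R_{si} \times R_{is} \to R_{ss}\) are \((x,y) \mapsto \langle x', y', y \rangle_{ki}\) for \(x = x'y'\), and lemma \ref{ass} finishes associativity. Note that your quotient step becomes unnecessary: reducedness of \(R\) holds directly, since an element of \(R_{ss}\) annihilated by \(R\) on both sides acts trivially on every \(R_{si}\) and \(R_{is}\) and hence is zero in \(E\), while for \(R_{ij}\) with \(i \neq j\) your own argument from the reduced \(\Phi\)-commutator relations applies.
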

\begin{proof}
The uniqueness follows from lemma \ref{red-un}. Let \(R_{ij} = U_{\mathrm e_i - \mathrm e_j}\) for \(i \neq j\) and \(m \colon R_{ij} \otimes R_{jk} \to R_{ik},\, x \otimes y \mapsto xy\) be the multiplication maps induced by the commutators. They are bilinear and \((xy)z = x(yz)\) for \(x \in R_{ij}\), \(y \in R_{jk}\), \(z \in R_{kl}\), and distinct \(i\), \(j\), \(k\), \(l\).

Fix an index \(s\). We construct the ring \(R_{ss}\) as a subring of \(E = \prod_{i \neq s} (\End(R_{is})^\op \times \End(R_{si}))\), where \(\End\) denote the rings of endomorphisms of \(K\)-modules. If \(s \neq i \neq j \neq s\), \(x \in R_{si}\), \(y \in R_{ij}\), \(z \in R_{js}\), then let \(\langle x, y, z \rangle_{ij} \in E\) be the element with the components
\begin{align*}
w &\mapsto x((yz)w) \text{ for } w \in R_{sk},\, k \notin \{i, s\}; &
w &\mapsto (xy)(zw) \text{ for } w \in R_{sk},\, k \notin \{j, s\}; \\
w &\mapsto (wx)(yz) \text{ for } w \in R_{ks},\, k \notin \{i, s\}; &
w &\mapsto (w(xy))z \text{ for } w \in R_{ks},\, k \notin \{j, s\}.
\end{align*}
The subring \(R_{ss} \subseteq E\) is generated by all \(\langle x, y, z \rangle_{ij}\). By lemma \ref{r-gen} below it is generated by \(\langle x, y, z \rangle_{ij}\) for any fixed \(i\), \(j\) and the homomorphism \(R_{ss} \to \End(R_{is})^\op \times \End(R_{si})\) is injective for any \(i \neq s\).

Let us construct the multiplication on \(R\). It is easy to see that there are unique maps \(R_{si} \times R_{is} \to R_{ss}\) for \(i \neq s\) such that \(\langle x, y, z \rangle_{ij} = x(yz) = (xy)z\). The multiplication maps \(R_{ss} \times R_{si} \to R_{si}\) and \(R_{is} \times R_{ss} \to R_{is}\) for \(i \neq s\) are given directly by the embedding of \(R_{ss}\) into \(E\), they clearly satisfy \((xy)z = x(yz)\) for \(x \in R_{sj}\), \(y \in R_{js}\), \(z \in R_{si}\) or \(x \in R_{is}\), \(y \in R_{sj}\), \(z \in R_{js}\), where \(i\), \(j\), \(s\) are distinct.

Finally, \(R_{ss}\) is a subring of \(E\) and \((xy)z = x(yz)\) for \(x \in R_{si}\), \(y \in R_{is}\), \(z \in R_{ss}\) or \(x \in R_{ss}\), \(y \in R_{si}\), \(z \in R_{is}\), where \(i \neq s\). Indeed, if \(s\), \(i\), \(j\), \(k\) are distinct, \(x \in R_{si}\), \(y \in R_{is}\), \(z \in R_{sj}\), \(w \in R_{js}\), \(u \in R_{sk}\), then
\begin{align*}
(((xy)z)w)u &= (x(yz))(wu) = x((yz)(wu)), \\
(xy)((zw)u) &= x(y(z(wu))) = x((yz)(wu)), \\
(x(y(zw)))u &= x(((yz)w)u) = x((yz)(wu)),
\end{align*}
and the symmetric identities also hold. From lemma \ref{ass} it follows that \(R\) is an associative ring and its Peirce decomposition is idempotent. Clearly, this is a reduced Peirce decomposition of a \(K\)-algebra.
\end{proof}

\begin{lemma} \label{r-gen}
The group \(R_{ss}\) is generated by \(\langle x, y, z \rangle_{ij}\) for any fixed \(i\), \(j\) and the homomorphism \(R_{ss} \to \End(R_{is})^\op \times \End(R_{si})\) is injective for any \(i\).
\end{lemma}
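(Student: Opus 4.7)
The plan is to prove both claims by repeated application of the $3$-product associativity $(xy)z = x(yz)$ for $x \in R_{ab}, y \in R_{bc}, z \in R_{cd}$ with $a, b, c, d$ pairwise distinct (from the $\mathsf{A}_3$-commutator identity), combined with the idempotence $R_{ab}R_{bc} = R_{ac}$ for pairwise distinct $a, b, c$. Because no ring structure on any $R_{tt}$ is yet available, every reassociation step must avoid chains with a repeated index; when the natural chain does have a repeat, the argument routes through an auxiliary index $h \notin \{s, i, j\}$ made available by the hypothesis $\ell \geq 4$.

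To establish generation, I would introduce two elementary index-changing moves. If $i \neq i', j'$, decomposing $x = \sum_l x_l'x_l''$ with $x_l' \in R_{si}$ and $x_l'' \in R_{ii'}$ via $R_{si'} = R_{si}R_{ii'}$, I verify componentwise in $E$ that
\[\langle x, y, z\rangle_{i'j'} = \sum_l \langle x_l', x_l''y, z\rangle_{ij'};\]
symmetrically, if $j \neq i', j'$, the move $\langle x, y, z\rangle_{i'j'} = \sum_m \langle x, y_m', y_m''z\rangle_{i'j}$ via $R_{i'j'} = R_{i'j}R_{jj'}$. Each componentwise check reduces to three four-distinct-index associativities. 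Iterating these moves, with a detour through an auxiliary pair $(i', h)$ or $(h, j')$ in the degenerate cases $i = j'$ or $j = i'$, expresses every $\langle x, y, z\rangle_{i'j'}$ as a sum of generators with pair $(i, j)$. Closure of this fixed-pair subgroup under the $E$-multiplication follows from an analogous product identity
\[\langle a, b, c\rangle_{ij} \cdot \langle a', b_n', b_n''c'\rangle_{ih} = \langle a, (bc)(a'b_n'), b_n''c'\rangle_{ih},\]
set up after a preparatory Move B decomposing $b' = \sum_n b_n'b_n''$ with $b_n' \in R_{ih}$, $b_n'' \in R_{hj}$; its componentwise verification is again a four-distinct-index associativity chase, and one final Move B returns the result to the pair $(i, j)$. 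Hence the fixed-pair subgroup is a subring containing all generators, so it equals $R_{ss}$.

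For injectivity, fix $i$, pick $j \notin \{i, s\}$, and by the just-proved generation write $r = \sum_t \langle x_t, y_t, z_t\rangle_{ij}$. The vanishing of the $\End(R_{is})^\op \times \End(R_{si})$-component supplies $\sum_t (x_ty_t)(z_tu) = 0$ for $u \in R_{si}$ and $\sum_t (v(x_ty_t))z_t = 0$ for $v \in R_{is}$. For the left action on $R_{sk}$ with $k \notin \{i, j, s\}$, I write $w = uv \in R_{si}R_{ik}$ by idempotence and rearrange $r \cdot w$ into $(r \cdot u) v = 0$ using $3$-associativities on the chain $(s, i, j, k)$. The residual case $k = j$ is handled by first obtaining $r \cdot R_{sh} = 0$ for some auxiliary $h \notin \{i, j, s\}$ from the previous step and then decomposing $R_{sj} = R_{sh}R_{hj}$; the essential ingredient is the coincidence of the two defining formulas $x_t((y_tz_t)u) = (x_ty_t)(z_tu)$ when $u \in R_{sh}$. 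The right action on $R_{ks}$ is handled dually, and together they show that $r$ vanishes in every factor of $E$.

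The main obstacle is the combinatorics of index distinctness: most individual steps are routine four-distinct-index associativity chases, but the natural chains frequently repeat an index, forcing detours through auxiliary indices and intermediate index-pair representations; a careful case analysis on the coincidences among $\{s, i, j, i', j', k\}$ is needed to verify every instance of both moves, the product formula, and the injectivity reduction.
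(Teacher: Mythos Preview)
Your proposal is correct and follows the paper's approach: the same index-changing identities for generation (your Moves A and B are exactly the paper's $\langle xy,z,w\rangle_{jk}=\langle x,yz,w\rangle_{ik}$ and $\langle x,y,zw\rangle_{ij}=\langle x,yz,w\rangle_{ik}$, summed over an idempotence decomposition), and the same propagation argument for injectivity---you propagate from the first index of the pair via $R_{sk}=R_{si}R_{ik}$, the paper from the second via $R_{sk}=R_{sj}R_{jk}$, which is the symmetric computation. You are more explicit than the paper about closure of the fixed-pair subgroup under the $E$-multiplication; the paper's laconic ``This implies the first claim'' leaves that product formula unstated, so your treatment is in fact slightly more complete on this point.
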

\begin{proof}
Let \(s\), \(i\), \(j\), \(k\) be different indices. Then \(\langle x, y, zw \rangle_{ij} = \langle x, yz, w \rangle_{ik}\) for \(x \in R_{si}\), \(y \in R_{ij}\), \(z \in R_{jk}\), \(w \in R_{ks}\) since
\begin{align*}
x((y(zw))u) &= x(((yz)w)u) \text{ if } l \neq i; \\
(xy)((zw)u) = (xy)(z(wu)) &= ((xy)z)(wu) = (x(yz))(wu) \text{ if } l = i; \\
(vx)(y(zw)) &= (vx)((yz)w) \text{ if } l \neq i; \\
(v(xy))(zw) = ((v(xy))z)w &= (v((xy)z))w = (v(x(yz)))w \text{ if } l = i
\end{align*}
for \(u \in R_{sl}\) and \(v \in R_{ls}\). Also, \(\langle x, yz, w \rangle_{ik} = \langle xy, z, w \rangle_{jk}\) for \(x \in R_{si}\), \(y \in R_{ij}\), 
\(z \in R_{jk}\), \(w \in R_{ks}\) by the symmetry. This implies the first claim.

In order to prove the second claim it suffices to show that if \(\sum_{t \in T} \langle x_t, y_t, z_t \rangle_{ij}\) has trivial image in \(\End(R_{sj})\), then it has trivial image in \(\End(R_{sk})\) for \(k \notin \{i, j, s\}\) (a similar result for the opposite endomorphism rings follows by the symmetry). Indeed, if \(u \in R_{sj}\), \(v \in R_{jk}\) are any elements, then
\[\sum_{t \in T} x_t ((y_t z_t) (uv)) = \sum_{t \in T} x_t (((y_t z_t) u) v) = \sum_{t \in T} (x_t ((y_t z_t) u)) v = 0. \qedhere\]
\end{proof}

\bibliographystyle{plain}  
\bibliography{references}

\end{document}